\newcommand{\upcite}[1]{\textsuperscript{\textsuperscript{\cite{#1}}}}
\newtheorem{definition}{Definition}[section]
\newtheorem{Lemma}[definition]{Lemma}
\newtheorem{theorem}[definition]{Theorem}
\newtheorem{proposition}[definition]{Proposition}
\newtheorem{eg}[definition]{Example}
\newtheorem{example}[definition]{Example}
\newtheorem{rk}[definition]{Remark}
\newtheorem*{ack}{Acknowledgement}
\newcommand{\rmnum}[1]{\romannumeral #1}
\newcommand{\Rmnum}[1]{\expandafter\@slowromancap\romannumeral #1@}
\title{Writhe polynomial for virtual links}
\author[1]{Mengjian Xu \thanks{xmjmath@mail.bnu.edu.cn}}
\affil[1]{School of Mathematical Sciences, Beijing Normal University, Beijing 100875, China; }
\affil[1]{Laboratory of Mathematics and Complex Systems, Ministry of Education, Beijing 100875, China}
\date{}
\begin{document}
\maketitle
\begin{abstract}

A weak chord index $Ind'$ is constructed for self crossing points of virtual links. Then a new writhe polynomial $W$ of virtual links is defined by using $Ind'$. $W$ is a generalization of writhe polynomial defined in \cite{Z.Cheng2013}. Based on $W$, three invariants of virtual links are constructed. These invariants can be used  to detect the non-trivialities of Kishino knot and flat Kishino knot.

\textbf{Keywords}: virtual link;  writhe polynomial; Kishino knot
\end{abstract}

\section{Introduction}\label{Introduction}

A knot is a 1-sphere $S^{1}$ embedded in 3-sphere $S^{3}$. One way to describe a knot is using knot diagram, which is a 4-valence graph obtained by the projection $S^{3}\rightarrow S^{2}$ with a general position. Every knot diagram can be represented by a unique Gauss diagram (see Definition \ref{Gauss diagram}). But there are Gauss diagrams that can not be realized by knots. Kauffman generalized knot theory to virtual knot theory in \cite{Kauffman.1998} to overcome this defect of classical knot theory. I will discuss this generalization in Section \ref{Poly} in more details. Introductory references \cite{Kauffman.2012}, \cite{Vassily} and \cite{Manturov2013} of virtual knot theory are recommended to readers. There is another interpretation of virtual knot theory. Roughly speaking, virtual knots are equivalent isotopic classes of knots in thickened surfaces modulo stablization (see \cite{J.Scott}). Kuperberg gave a topological interpretation of virtual links in \cite{what}.

A natural question is whether we can determine a virtual link is classical or not. The main step for solving this question is to look for computable and efficient invariants of virtual links. There are many invariants of virtual links, which are generalizations of invariants of classical links, such as quandle \cite{Manturov2002}, Jones polynomial, Khovanov homology\cite{Manturov2007}.  We want to seek for some other invariants of virtual links having high efficiency in telling a link is classical or not.  Considering this, there is a class of invariants having this property, which has the same value on classical links, such as Alexander polynomial\cite{Sliver}, span of link \cite{Z.Cheng2013}. However, if you confine to virtual knot, the first invariant to notice is the odd writhe\cite{Kauffman.2004}. Many researchers generalized it to a polynomial invariant $W_{K}(t)=\sum_{Ind(c)\neq 0}w(c)t^{Ind(c)}$ independently in \cite{Z.Cheng2013}, \cite{Dye13}, \cite{Im13}, \cite{Kauffman.2013} and \cite{Satoh14}. Following \cite{Z.Cheng2013}, I will call this polynomial invariant writhe polynomial in this paper.

Writhe polynomial has many advantages in detecting virtuality. For example, it is easy to compute and efficient to determine the virtuality of a large class of virtual knots. It also gives a constraint on periodicity of virtual knot (see \cite{Bae}). Besides, it can be used to construct flat invariants easily (see \cite{Im13}). It has many variants such as zero polynomial  \cite{Jeong}, transcendental polynomial  \cite{Z.Cheng.2015}, L-polynomial  \cite{Kaur}, etc. However, most  variants of writhe polynomial including itself are invariants of virtual knots but not virtual links. The main content of this paper is to define a link version of writhe polynomial and discuss some applications of it.

There is a link version of writhe polynomial, which appeared in \cite{Im17}. Let me briefly review it. Suppose $L=K_{1}\cup \cdots \cup K_{n}$ is an $n$-component virtual link diagram. The writhe polynomial in the sense of \cite{Im17} is $$Q_{L}(x,y)=\sum_{c\in S(L)}\omega(c)(x^{i(c)}-1)+\sum_{c\in M(L)}\omega(c)(y^{i(c)}-1),$$ where $S(L)$ and $M(L)$ are the set of self crossing points and the set of linking crossing points respectively. $i(c)$ is called virtual intersection index in \cite{Lee14}. For more details of terminologies in this equation, see \cite{Im17} and \cite{Lee14}. The first sum of $Q_{L}(x,y)$ is just $\sum_{i=1}^{n}P_{K_{i}}(x)$. Here $P_{K_{i}}(x)$ denotes the affine index polynomial of \cite{Kauffman.2013}, which is equivalent to writhe polynomial. For $c\in M(K_{r}\cup K_{j})$, $|i(c)|$ is the span of $K_{r}\cup K_{j}$, where $r$ is different from $j$. It is easy to see that there is no interrelationship between self crossings and linking crossings in this polynomial.

Before I advertise that my polynomial include some information of internal relation between self crossing points and linking crossing points, let us take a closer look at writhe polynomial $W_{K}(t)$. The main constituent of writhe polynomial is the index function $Ind(c)$.  Cheng axiomatized $Ind(c)$ in \cite{Z.Cheng.2017}. These axioms are called  chord index axioms (see Definition \ref{axiom}). If you have a chord index function in hand, then you can formally construct an invariant of virtual links. For details, see Section \ref{Poly}.

My strategy for constructing writhe polynomial of virtual links is to construct an new index function denoted by $Ind'(c)$, which satisfys all chord axioms of Definition \ref{axiom} except (1). There is indeed interrelationship between self crossing points and linking crossing points in $Ind'(c)$ (see Example \ref{eg1}). Then writhe polynomial is defined by $$W_{L}=\sum_{\substack{c\in S(L) \\ Ind(c)\neq 0}}\omega(c)t^{Ind'(c)}.$$ For more details, see Section \ref{Writhe poly}. At this moment, you may be unsatisfied with the sum is taken on self crossing points. In fact, Cheng-Gao defined the so-called linking polynomial $L(t)$ in \cite{Z.Cheng2013}, whose definition relies on "chord index" $N(c)$ of a linking crossing point $c$. Technically, $N(c)$ is not a chord index in the sense of Definition \ref{axiom}. They disposed some subtle nondeterminacy. The polynomial invariant $Q_{L}(x,y)$ in \cite{Im17} can be regarded as combining the informations of writhe polynomial and $L(t)$. To some extent, $W_{L}(t)$ and $L(t)$ are complementary.

Let me finish this section with an introduction to the structure of this paper. I recall some classical results of virtual knot theory in Section \ref{Poly}, develop fundamental tools of this paper in Section \ref{Axioms_index} and introduce some basic flat virtual knot theory in Section \ref{flat virtual knot}. Then the writhe polynomials $W_{L}(t)$ is constructed in Section \ref{Writhe poly}. Combining $W_{L}(t)$ with invariant $\mathcal{L}(L)$ of \cite{Z.Cheng.2018}, three invariants are constructed in Section \ref{Kishino}. And they can be used to detect the non-trivialities of Kishino knot and its variant. Finally, I prove a technical lemma i.e. Lemma \ref{wci} in Section \ref{tech}.

\section{Virtual knot theory and polynomial invariants of virtual knots }\label{Poly}

Classical knot theory studies the embeddings of $S^{1}\rightarrow S^{3}$ up to isotopy.  From combinatorial viewpoint, classical links are equivalent classes of link diagrams modulo Reidemeister moves i.e. the first three moves of  Fig.\ref{GRM}. The virtual knot theory can be seen as a combinatorial generalization of classical knot theory. A \emph{virtual link diagram} is a 4-valence graph, part of whose vertices are crossing points. The rest vertices are called virtual crossing points. And we use a circle at the virtual crossing point to point out it's virtuality (see Fig.\ref{VCP}). \emph{Virtual links} are equivalent classes of virtual link diagrams modulo generalized Reidemeister moves i.e. moves of Fig.\ref{GRM}.

\begin{figure}
\centering
 \subfigure{\includegraphics[height = 2cm, width = 2cm]{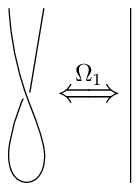}}
 \ \ \ \ \ \
 \subfigure{\includegraphics[height = 2cm, width = 2cm]{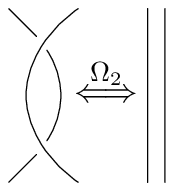}}
 \ \ \ \ \ \
 \subfigure{\includegraphics[height = 2cm, width = 4cm]{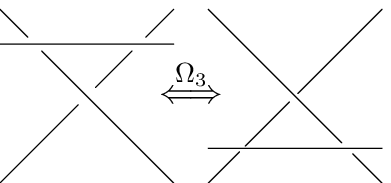}}

 \subfigure{\includegraphics[height = 2cm, width = 2cm]{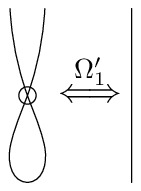}}
 \ \ \ \ \ \
 \subfigure{\includegraphics[height = 2cm, width = 2cm]{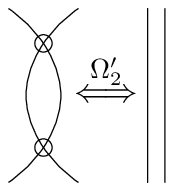}}
 \ \ \ \ \ \
 \subfigure{\includegraphics[height = 2cm, width = 4cm]{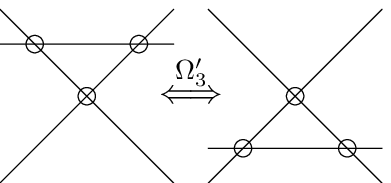}}
 \ \ \ \ \ \
 \subfigure{\includegraphics[height = 2cm, width = 4cm]{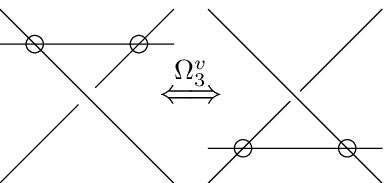}}\caption{generalised Reidemeister moves}\label{GRM}

\end{figure}

In Fig.\ref{GRM}, the first three moves are called Reidemeister moves. The next three are called virutal Reidemeister moves. The last one is called semi-virtual Reidemeister moves.

\begin{figure}
 \centering
 \includegraphics[height =1cm, width =1cm]{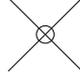}
 \caption{virtual crossing point}\label{VCP}
\end{figure}

\begin{definition}\upcite{Vassily}\label{Gauss diagram}
Given a virtual knot diagram, the Gauss diagram consists of a circle representing the preimage
of the virtual knot, a chord on the circle connecting the two preimages of each crossing point, an arrow
on each chord directed from the preimage of the overcrossing to the preimage of the undercrossing, a
positive or negative sign on each chord representing the writhe(Fig.\ref{writhe}) of that crossing point.

\end{definition}

\begin{figure}
 \centering
 \includegraphics[height = 1cm, width = 3cm]{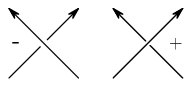}
 \caption{writhe}\label{writhe}
\end{figure}

It is well known that Gauss diagram of virtual knot diagram completely determines the virtual knot diagram up to virtual and semi-virtual Reidemeister moves.

We can also define \emph{Gauss diagram} of a virtual link diagram $L=K_{1}\cup\cdots \cup K_{n}$. The Gauss diagram consists of $n$ Gauss diagrams of $K_{1}$, $K_{2}$, $\cdots$, and $K_{n}$, a chord between two Gauss diagrams corresponding to a linking crossing point, an arrow and an sign on each chord as in Definition \ref{Gauss diagram}. I want to emphasize that every circle in the Gauss diagram of $L$ has counterclockwise orientation.

There are some notation conventions.  In this paper, I use $K$ ($L$) to denote a knot (link) diagram, the corresponding knot (link) class and the corresponding Gauss diagram. And I use $c$ to denote a real crossing  and the corresponding chord. The exact meaning of these notations can be read from the context. There are two kinds of real crossing point in $L$. A real crossing point of a single component is called a self crossing point. The rest of real crossing points of $L$ are called linking crossing points. Let $S(L)$, $M(L)$ and $C(L)$ denote the set of self crossing points, the set of linking crossing points and the set of all real crossing points of $L$ respectively. 

Let us recall the definition of index in \cite{Im172}. For a chord $c$ of a Gauss diagram $L=K_{1}\cup \cdots \cup K_{n}$, we assign a sign for each endpoint of $c$ as shown in Figure \ref{endpt}, where $e$ represents an endpoint of $c$. If $c$ is a crossing point of $K_{i}$, it divides $K_{i}$ into two parts, where $K_{i}$ also represents the corresponding components of the Gauss diagram $L$. The left part of $c$ is shown in Figure \ref{left}, where $e$ represents some endpoint of some other chord. We use $left(c)$ to denote the left part of $c$. Then the index function of $c$ is defined as follows.


\begin{figure}
 \centering
 \includegraphics[height =2cm, width =10cm]{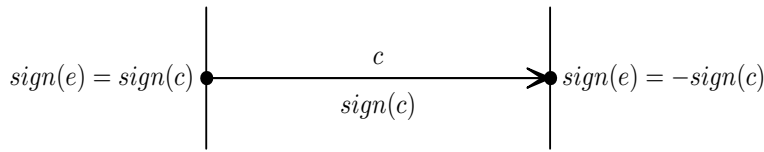}
 \caption{sign of endpoints}\label{endpt}
\end{figure}

\begin{figure}
 \centering
 \includegraphics[height =2cm, width =3cm]{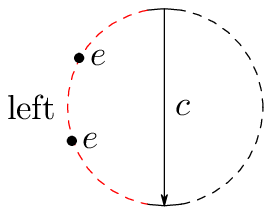}
 \caption{left part}\label{left}
\end{figure}

\begin{definition}\label{Ind for knot}

Given a Gauss diagram  $K$, we define index function for every chord $c$ as follows:

$$Ind(c)=\sum_{e\in left(c)}sign(e),$$ $e$ is an endpoint of some chord other than $c$.

\end{definition}

 Cheng used this index function to define odd writhe polynomial in \cite{Z.Cheng.2012} as follows: $$f_{K}(t)=\sum_{Ind(c_{i})\neq 0 \ \ mod 2}\omega(c_{i})t^{Ind(c_{i})+1},$$ where $\omega(c)$ denotes the writhe of $c$. Then, Cheng and Gao generalized it to writhe polynomial\cite{Z.Cheng2013} as follows: $$W_{K}(t)=\sum_{Ind(c)\neq 0}\omega(c)t^{Ind(c)}. $$ Here $W_{K}$ is slightly different from but equivalent to the original one. This is closely related to affine index polynomial defined by Kauffman in \cite{Kauffman.2013}, which is $$P_{K}(t)=\sum_{c\in C(K)}\omega(c)t^{Ind(c)}-\omega(K).$$  $\omega(K)$ denotes the writhe of the knot diagram $K$. This polynomial invariant was independently introduced in \cite{Dye13}, \cite{Im13} and \cite{Satoh14}.


 \section{Axioms of index function}\label{Axioms_index}

 As I mentioned in the introduction to this paper, the main constituent of $W_{K}(t)$ and $P_{K}(t)$ is the index function $Ind$. Many authors modify the index function $Ind$ to obtain variants of writhe polynomial. Cheng axiomatized the index function as follows.

\begin{definition}\upcite{Z.Cheng.2018}\label{axiom}
Assume for each real crossing point $c$ of a virtual link diagram, according to some rules we can assign an index (e.g. an integer, a polynomial, a group etc.) to it. We say this index satisfies the chord index axioms if it satisfies the following requirements:

(1) The real crossing point involved in $\Omega_{1}$ has a fixed index (with respect to a fixed virtual link);

(2) The two real crossing points involved in $\Omega_{2}$ have the same indices;

(3) The indices of the three real crossing points involved in $\Omega_{3}$ are preserved under $\Omega_{3}$ respectively;

(4) The index of the real crossing point involved in $\Omega_{3}^{v}$ is preserved under $\Omega_{3}^{v}$;

(5) The index of any real crossing point not involved in a generalized Reidemeister move is preserved under this move.

\end{definition} Let us call an index function a chord index, if it satisfies chord index axioms.

If you have a chord index $ind$ in hand, you can construct a formal polynomial invariant of virtual links as follows. $$F_{L}(t)=\sum_{c}\omega(c)(t^{ind(c)}-t^{f(L)}),$$  where $f(L)$ is the fixed index with respect to $L$ in the first axiom.

To prove the invariance of $F_{L}(t)$, the next lemma is very useful. It will also be used in this paper. This lemma is due to Polyak.

 \begin{Lemma}\upcite{MP}\label{Polyak}
There is an minimal generating set of  Reidemeister moves shown in Figure \ref{Polyakresults}.
\end{Lemma}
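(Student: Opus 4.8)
Since this statement is Polyak's classification of oriented Reidemeister moves (the content of \cite{MP}), my plan follows his two-part strategy: a \emph{generation} step, showing that the moves displayed in Figure \ref{Polyakresults} suffice to produce every oriented Reidemeister move, and a \emph{minimality} step, showing that none of the chosen generators can be dropped. First I would fix the complete list of oriented moves to be generated: the four oriented $\Omega_{1}$ moves (two kink signs times two rotation directions), the four oriented $\Omega_{2}$ moves (distinguished by the relative orientation of the two arcs and by which arc is the overstrand), and the family of oriented $\Omega_{3}$ moves (distinguished by the orientations of the three arcs and the over/under pattern), reduced by the obvious triangle symmetries. The goal of the generation half is to express each of these as a finite word in the chosen generators together with planar isotopy.

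The workhorse is the derivation of all oriented $\Omega_{2}$ variants from the single $\Omega_{2}$ generator: rotating the local picture and using the $\Omega_{1}$ generators to flip the orientation of a strand produces the remaining $\Omega_{2}$ moves, and these then serve as ``detour'' moves that slide an arc across a crossing. The heart of the argument, and the step I expect to be the main obstacle, is using these $\Omega_{2}$ moves to reduce every oriented $\Omega_{3}$ to the one $\Omega_{3}$ generator in the list. The clean way to do this is to show that any two oriented $\Omega_{3}$ moves differ by a sequence of $\Omega_{2}$ moves: given a target $\Omega_{3}$, one inserts a canceling pair of crossings by an $\Omega_{2}$ to adjust the over/under status or the orientation on one of the three arcs, applies the generating $\Omega_{3}$, and then removes the auxiliary crossings by $\Omega_{2}$ again. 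Carrying this out over all combinations of orientation and over/under data is the bookkeeping that makes the proof long; organizing the cases so that cyclic rotation and reflection of the triangle collapse them into a short checkable list is what keeps the argument finite.

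For minimality I would, for each generator $m$, construct a quantity invariant under all the \emph{other} generators but changed by $m$, together with an explicit pair of diagrams that are Reidemeister-equivalent yet cannot be connected without $m$. For the $\Omega_{1}$ generators the framing/writhe together with a rotation-number count distinguishes the two kink types; for the $\Omega_{2}$ generator one uses that $\Omega_{1}$ and $\Omega_{3}$ alone preserve a suitable crossing-count (e.g. a linking-type parity) that $\Omega_{2}$ alters; for the $\Omega_{3}$ generator one exhibits diagrams requiring a genuine triangle move, detected by an arrow-configuration (Gauss-diagram) invariant that is insensitive to $\Omega_{1}$ and $\Omega_{2}$. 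Each such semi-invariant witnesses that removing the corresponding generator strictly shrinks the generated equivalence relation, which together with the generation step establishes both sufficiency and minimality. Since the statement is quoted verbatim from \cite{MP}, the proof in the present paper can simply invoke that reference, with the figure recording the resulting generating set.
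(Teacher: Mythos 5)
The paper does not prove this lemma at all: it is imported verbatim from Polyak's paper \cite{MP} (note the citation attached to the statement), and is used only as a tool to shorten the case analysis in Theorem \ref{model} and Lemma \ref{wci}. So your closing remark --- that the paper can simply invoke the reference --- is exactly what the paper does, and for the purposes of this paper that is all that is required.

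However, since most of your proposal is a reconstruction of Polyak's argument, I should point out that one of its key steps is wrong as stated. You claim the remaining oriented $\Omega_{2}$ variants are obtained from the single generator $\Omega_{2a}$ by ``rotating the local picture and using the $\Omega_{1}$ generators to flip the orientation of a strand.'' Neither mechanism works: the oriented move types are already classified up to planar isotopy, so rotating the picture produces no new variant, and an $\Omega_{1}$ move inserts or deletes a kink but can never reverse the orientation of a strand. In Polyak's actual proof, deriving the other oriented $\Omega_{2}$ moves (in particular the antiparallel ones) from $\Omega_{2a}$ genuinely requires $\Omega_{3a}$, combined with $\Omega_{1}$ kinks for some variants; this interdependence --- that the triangle move is needed to generate the other $\Omega_{2}$'s, which in turn serve as detours to generate the other $\Omega_{3}$'s --- is precisely what makes the generation half non-trivial and forces the order of the lemmas in \cite{MP}. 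Your second step (reducing all oriented $\Omega_{3}$ moves to $\Omega_{3a}$ via $\Omega_{2}$ detours) and your minimality strategy (semi-invariants such as writhe/rotation number for the $\Omega_{1}$ generators, a crossing-count preserved by $\Omega_{1},\Omega_{3}$ for $\Omega_{2}$, and Gauss-diagram configuration counts for $\Omega_{3}$) are consistent with Polyak's scheme, but as written your generation argument would break down at the $\Omega_{2}$ stage.
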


Here Reidemeister moves mean the oriented version of the three moves shown in Figure \ref{GRM}.

\begin{figure}

\centering
 \subfigure[$\Omega1a$]{\includegraphics[height = 2cm, width = 2cm]{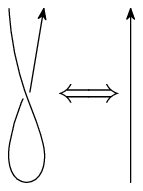}}
 \ \ \ \ \ \
 \subfigure[$\Omega1b$]{\includegraphics[height = 2cm, width = 2cm]{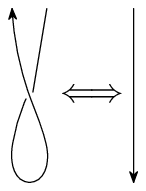}}
 \ \ \ \ \ \
 \subfigure[$\Omega2a$]{\includegraphics[height = 2cm, width = 2cm]{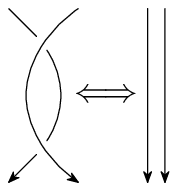}}
 \ \ \ \ \ \
 \subfigure[$\Omega3a$]{\includegraphics[height = 2cm, width = 4cm]{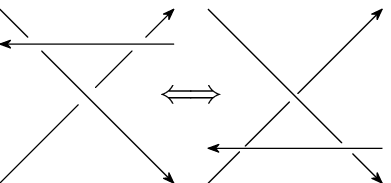}}\caption{generators of classical Reidemeister moves}\label{Polyakresults}
\end{figure}

This lemma can reduce the cases of Reidemeister moves in the proof. Hence, we can modify the chord index axioms as follows.


\begin{definition}
An index function for self crossing points is called a weak chord index if it satisfies the following axioms:

(2') The two self crossing points involved in $\Omega_{2a}$ have the same indices;

(3') The indices of the self crossing points involved in $\Omega_{3a}$ are preserved under $\Omega_{3a}$;

(4) and (5) of chord index axioms, where real crossing points are replaced by self crossing points.

Here $\Omega_{2a}$ and $\Omega_{3a}$ are shown in Figure \ref{Polyakresults}.
\end{definition}

Inspired by $W_{K}(t)$, we can construct an invariant as follows.

\begin{theorem}\label{model}
$$G_{L}(t)=\sum_{\substack{c\in S(L) \\ Ind(c)\neq 0}}\omega(c)t^{wci(c)}$$ is an invariant, where $wci$ is a weak chord index.

\end{theorem}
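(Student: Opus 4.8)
The plan is to verify that $G_{L}(t)$ is unchanged under every generalized Reidemeister move. By Lemma \ref{Polyak} it suffices to check the four classical generators $\Omega_{1a}$, $\Omega_{1b}$, $\Omega_{2a}$, $\Omega_{3a}$ together with the purely virtual moves and the semi-virtual move $\Omega_{3}^{v}$. To each self crossing $c$ I attach three data: its writhe $\omega(c)$, its weak chord index $wci(c)$ (which sits in the exponent), and its index $Ind(c)$ (which decides membership in the sum). Throughout I use that both $wci$ and the ordinary index $Ind$ behave like weak chord indices on self crossings, so that axioms (2'), (3'), (4), (5) apply to each of them. The virtual and semi-virtual moves are immediate: a purely virtual move creates and destroys no real crossing and fixes $S(L)$ together with all three data, while under $\Omega_{3}^{v}$ axiom (4) fixes $wci$ and $Ind$ of the participating self crossing, its writhe is unchanged, and axiom (5) covers everything else. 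Hence $G_{L}(t)$ is preserved by these moves.

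The core of the argument is the three classical generators and their interaction with the condition $Ind(c)\neq 0$. Under $\Omega_{1a}$ or $\Omega_{1b}$ the created self crossing $c$ is a kink, whose chord has adjacent endpoints; a direct computation from the definition of $Ind$ gives $Ind(c)=0$, so $c$ is filtered out of the sum, and the same adjacency shows that inserting the kink leaves $Ind(c')$ unchanged for every other self crossing $c'$, so no crossing changes its membership status while axiom (5) fixes the surviving $wci$ and writhes. Under $\Omega_{2a}$, when the two crossings $c_{1},c_{2}$ are self crossings we have $\omega(c_{1})=-\omega(c_{2})$, axiom (2') gives $wci(c_{1})=wci(c_{2})$, and the same axiom for $Ind$ gives $Ind(c_{1})=Ind(c_{2})$; the two therefore pass the test $Ind\neq 0$ simultaneously, and when they do their contributions $\omega(c_{1})t^{wci(c_{1})}+\omega(c_{2})t^{wci(c_{2})}$ cancel. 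Under $\Omega_{3a}$, each participating self crossing keeps its $Ind$ (hence its membership) and its $wci$ by axiom (3'), as well as its writhe, so every surviving term is individually preserved. In both $\Omega_{2a}$ and $\Omega_{3a}$ some of the crossings involved may instead be linking crossings; these never appear in $G_{L}(t)$, and axiom (5), applied to $Ind$ and to $wci$, guarantees that the self crossings which are not directly moved are undisturbed.

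The step I expect to be the main obstacle is the $\Omega_{1}$ analysis, and more precisely the decision to police membership through $Ind$ rather than through $wci$. Since $wci$ is only a weak chord index it need not satisfy axiom (1), so it carries no information about the kink crossing; invariance under $\Omega_{1}$ must therefore come solely from the vanishing of $Ind$ on the kink and the resulting exclusion of that crossing, which is exactly the reason the sum is restricted to $Ind(c)\neq 0$. This forces a separate check that the ordinary index $Ind$, when read off from self crossings of a \emph{link} rather than a knot, still behaves like a weak chord index: it must vanish on kinks, agree on the two crossings of $\Omega_{2a}$, and be preserved under $\Omega_{3a}$, even though now $left(c)$ may contain endpoints of linking chords. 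Once this stability of $Ind$ on self crossings is secured, the signed $wci$-weighted sum is invariant move by move and the theorem follows.
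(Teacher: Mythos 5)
Your proposal is correct and follows essentially the same route as the paper's proof: reduce to Polyak's generators via Lemma \ref{Polyak}, discard the $\Omega_{1}$ kink because $Ind(c)=0$, cancel the two $\Omega_{2a}$ crossings by combining axiom (2') for $wci$ with the corresponding axiom for $Ind$ (so membership in the sum is decided simultaneously), and preserve every remaining term under $\Omega_{3a}$, $\Omega_{3}^{v}$, the purely virtual moves, and for crossings not involved in the move, via axioms (3'), (4), (5). Your closing observation, that membership must be policed by $Ind$ (which vanishes on kinks) rather than by $wci$ (which need not satisfy axiom (1)), is precisely the mechanism the paper's restriction $Ind(c)\neq 0$ is built to exploit, so no gap remains.
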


\begin{proof}
Suppose link diagram $L'$ is obtained by conducting a Reidemeister move $\Omega$ on $L$.


If $\Omega=\Omega_{1}$ and $c$ is the crossing point involved in $\Omega_{1}$, then  $Ind(c)=0$. So, $c$ has no contribution to $G_{L}(t)$. On the other hand, there is no corresponding crossing point in $L'$.

If $\Omega=\Omega_{2a}$ and the crossing points involved in $\Omega_{2a}$ are denoted by $c_{1}$ and $c_{2}$, then either both $Ind(c_{1})$ and $Ind(c_{2})$ are $0$ or neither of them are by (2). Hence the contributions of $c_{1}$ and $c_{2}$ to $G_{L}(t)$ cancel out by (2'). On the other hand, there are no corresponding crossing points in $L'$.


Suppose $c$ is a self crossing point satisfying one of following condition:

  \begin{itemize}

\item $\Omega=\Omega_{3a}$ and $c$ is a arbitrary self crossing point involved in $\Omega_{3a}$;

\item $\Omega=\Omega_{3}^{v}$ and $c$ is the self crossing point involved in $\Omega_{3}^{v}$;

\item $c$ is a crossing point not involved in $\Omega$.

\end{itemize}

Let $c'$ be the corresponding crossing point of $c$ on $L'$. Then either both $Ind(c')$ and $Ind(c)$ are $0$ or neither of them are by (3), (4) or (5). Hence $c$ and $c'$ have the same contribution to $G_{L}(t)$ and $G_{L'}(t)$ respectively.

We have verified the invariance on generators of generalized Reidemeister moves by Lemma \ref{Polyak}. Hence, $G_{L}(t)$ is an invariant.

\end{proof}

\begin{rk}

$Ind(c)\neq 0$ can be replaced by $ind(c)\neq f(L)$, here $ind(c)$ is any index function who satisfies the chord index axioms and $f(L)$ is the fixed index with respect to $L$ in the first axiom. The proof is the same. For example, we can use chord index $g_{c}(s)$ of \cite{Z.Cheng.2015} to get a refined invariant. But $Ind(c)\neq 0$ is enough for proving the virtuality of all examples of this paper especially Kishino knot.

\end{rk}

\section{Flat virtual knot theory}\label{flat virtual knot}

Before defining the writhe polynomial of virtual links, let me recall some basic facts of flat virtual knot theory. I will test our theory on some flat virtual knots in Section \ref{Kishino}.
\begin{definition}
A flat virtual link diagram is a virtual link diagram with the classical points replaced by flat points, which are points without overcrossing and undercrossing  information. \emph{Flat links} are equivalence classes of link diagrams with equivalence relation generated by flat Reidemeister moves(Fig.\ref{freeGRM}).
\end{definition}

\begin{figure}
\centering
 \subfigure{\includegraphics[height = 2cm, width = 2cm]{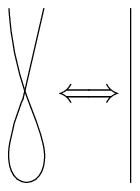}}
 \ \ \ \ \ \
 \subfigure{\includegraphics[height = 2cm, width = 2cm]{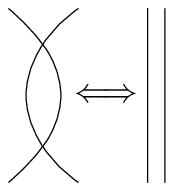}}
 \ \ \ \ \ \
 \subfigure{\includegraphics[height = 2cm, width = 4cm]{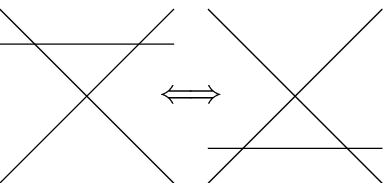}}

 \subfigure{\includegraphics[height = 2cm, width = 2cm]{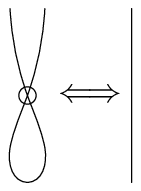}}
 \ \ \ \ \ \
 \subfigure{\includegraphics[height = 2cm, width = 2cm]{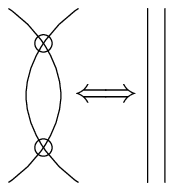}}
 \ \ \ \ \ \
 \subfigure{\includegraphics[height = 2cm, width = 4cm]{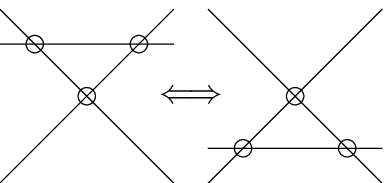}}
 \ \ \ \ \ \
 \subfigure{\includegraphics[height = 2cm, width = 4cm]{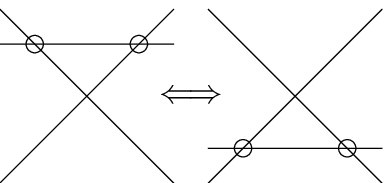}}\caption{Flat Reidemeister moves}\label{freeGRM}

\end{figure}

There is an obvious surjection $F: \{virutal\ links\}\rightarrow \{flat\ virtual\ links\}$. Flat virtual knot diagram $F(K)$ is obtained by replacing all real crossing points of $K$ with corresponding flat crossing points. Following \cite{Z.Cheng.2018}, $F(K)$ is also called the \emph{shadow} of $K$ and denoted by $\widetilde{K}$.  Flat virtual links can also be seen as equivalent classes of virtual links modulo crossing changes, where crossing changes mean switching real crossing points. We say a virtual link invariant $f$ is a flat invariant if it can factor through flat virtual links i.e.
\begin{center}
\
\xymatrix{
  \{virtual\ links\} \ar[rr]^{f} \ar[dr]_{}
                &  &  range       \\
                & \{flat\ virtual\ links\} \ar[ur]_{}              }
\end{center}

We can use writhe polynomial $W_{K}$ to construct a flat invariant of virtual knot $K$ as follows.
\begin{proposition}\label{ftl}
$\overline{W}_{K}$ is a flat invariant of virtual knots, where $\overline{W}_{K}=W_{K}(t)-W_{K}(t^{-1}).$

\end{proposition}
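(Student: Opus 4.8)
The plan is to exploit the description, recalled just before the statement, of flat virtual knots as virtual knots modulo crossing changes. Since $W_K$ is already a virtual knot invariant, so is $\overline{W}_K = W_K(t) - W_K(t^{-1})$; hence to prove flatness it suffices to show that $\overline{W}_K$ is unchanged when a single real crossing of $K$ is switched. Invariance under the generalized Reidemeister moves, already available, then gives that $\overline{W}_K$ factors through the shadow $\widetilde{K}$, which is exactly the flatness statement.

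First I would isolate the effect of a crossing switch at a chord $c$ on the two ingredients of $W_K$, namely $\omega(c)$ and $Ind(c)$. Switching $c$ reverses its arrow and flips its writhe, so immediately $\omega(c) \mapsto -\omega(c)$. For the index I would record two facts about the endpoint signs of Figure \ref{endpt}: (i) the two endpoints of any single chord carry opposite signs, so that the sum of $sign(e)$ over all endpoints other than those of $c$ is $0$, whence $\sum_{e\in left(c)} sign(e) = -\sum_{e \in right(c)} sign(e)$; and (ii) reversing the arrow of $c$ interchanges $left(c)$ and $right(c)$. Together these give $Ind(c) \mapsto -Ind(c)$.

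Second, I must check that switching $c$ leaves the data of every other crossing $c'$ untouched. Its writhe is obviously unaffected, and for its index I need the signs of the two endpoints of $c$ (which may lie in $left(c')$) to be preserved by the switch. This should follow because the switch simultaneously flips the writhe sign of $c$ and exchanges its head and tail; in the convention of Figure \ref{endpt} these two changes cancel, so each endpoint sign of $c$ is fixed. Consequently $Ind(c')$ is preserved for all $c' \neq c$, and the only modification of $W_K$ comes from the single chord $c$.

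Putting this together: if $Ind(c)=0$ then $c$ contributes to neither $W_K$ nor $W_{K'}$ and there is nothing to prove; if $Ind(c)\neq 0$ I would obtain
\[
W_{K'}(t) = W_K(t) - \omega(c)\bigl(t^{Ind(c)} + t^{-Ind(c)}\bigr).
\]
The correction term is palindromic, i.e. invariant under $t \mapsto t^{-1}$, so it cancels in the difference $\overline{W}_K = W_K(t) - W_K(t^{-1})$, yielding $\overline{W}_{K'} = \overline{W}_K$. The main obstacle is precisely the bookkeeping of the two middle paragraphs, namely pinning down from Figure \ref{endpt} how the endpoint signs and the $left(c)/right(c)$ split transform under an arrow reversal; once those sign computations are settled, everything else is a short formal manipulation.
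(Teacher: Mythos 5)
Your proposal is correct and takes essentially the same route as the paper's proof: both reduce flatness to invariance under a single crossing change, observe that on the Gauss diagram this reverses the arrow of $c$ and flips its sign (so $\omega(c)\mapsto-\omega(c)$ and $Ind(c)\mapsto-Ind(c)$) while leaving every other chord's writhe and index untouched, and then note the cancellation in $W_{K}(t)-W_{K}(t^{-1})$. Your two middle paragraphs simply supply the endpoint-sign bookkeeping that the paper asserts without proof, so this is a more detailed write-up of the same argument rather than a different one.
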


\begin{proof}

It suffices to prove $\overline{W}_{K}$ remains unchanged under crossing changes. Let $K'$ denote the virtual knot diagram obtained by performing crossing change on $c$. The Gauss diagram of $K'$ is obtained from the Gauss diagram of $K$ by reversing the orientation of arrow $c$, alternating the sign of it and remaining others unchanged. If $a\neq c$, $w(a)t^{Ind(a)}$ remains unchanged, but $w(c)t^{Ind(c)}$ becomes to $-w(c)t^{-Ind(c)}$. However, the contribution of $c$ to $\overline{W}_{K}(t)$ is $w(c)t^{Ind(c)}-w(c)t^{-Ind(c)}.$ Hence $\overline{W}_{K}$ remains unchanged under crossing changes.

\end{proof}

\begin{rk}\label{flatrk}

The coefficient of $t^{n}$ in $\overline{W}_{K}(t)$ is called n-th dwrithe in \cite{Kaur}, which is also a flat invariant.

In fact, Proposition \ref{ftl} follows from a general construction. Given a variant of writhe polynomial, say $F_{L}(t)$, then $\overline{F}_{L}(t)=F_{L}(t)+F_{\overline{L}}(t)$ is a flat invariant, where $\overline{L}$ is obtained form $L$ by switching all real crossing points of $L$. The proof is a direct computation.

\end{rk}

\section{Writhe polynomial of virtual links}\label{Writhe poly}

Now we are in position to give the precise definition of writhe polynomial for virtual links.
First, let us fix some notations. Let
$L=K_{1}\cup \cdots \cup K_{n}$ be a virtual link, and $c$ a self crossing point, which belongs to some single component $K_{i}$. Following \cite{Im17} and \cite{Lee14}, $Ind(c)$ should be computed by regarding $K_{i}$ as a single knot. We construct a new index function for $c$ as follows.
\begin{definition}\label{ind for link}

$$Ind'(c)=\sum_{e\in left(c)}sign(e).$$ $e$ is an endpoint of some chord other than $c$.

\end{definition}

Recall that $left(c)$ is the left part of $c$ shown in Figure \ref{left}. You may wonder whether Definition \ref{ind for link} is the same as Definition \ref{Ind for knot}. The difference between $Ind'(c)$ and $Ind(c)$ is $left(c)$ in Definition \ref{ind for link} would contain some endpoints of linking crossing points. This would lead to the following proposition.

\begin{proposition}\label{difference}

$Ind'$ does not satisfy the first axiom of Definition \ref{axiom} while $Ind$ does.

\end{proposition}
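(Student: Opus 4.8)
The plan is to establish the two halves of the statement separately, since they are genuinely different in character. The claim that $Ind$ satisfies the first axiom is essentially recorded already in Definition \ref{Ind for knot} together with the discussion preceding it: for a knot, $Ind(c)$ is the classical index, and the crossing involved in an $\Omega_1$ move is a kink whose chord has both endpoints adjacent on the circle, enclosing no other endpoint in its left part, so $Ind(c)=0$. Thus the fixed index guaranteed by axiom (1) is $f(L)=0$. I would state this cleanly as the easy direction and spend only a sentence on it, recalling that this is exactly why the $\Omega_1$ term drops out of $W_K(t)$ in the proof of Theorem \ref{model}.

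The substantive half is showing that $Ind'$ fails axiom (1). The approach is to exhibit, by direct construction, two virtual link diagrams related by a single $\Omega_1$ move applied to a self crossing, for which the value $Ind'(c)$ of the kink crossing is \emph{not} the same number in both diagrams — equivalently, to show $Ind'(c)$ is not forced to equal a single fixed value $f(L)$. The key point, already flagged in the paragraph after Definition \ref{ind for link}, is that $left(c)$ for a self crossing $c$ of the component $K_i$ can now contain endpoints of \emph{linking} chords, i.e. endpoints coming from chords that connect $K_i$ to other components. So I would build a two-component link $L=K_1\cup K_2$ in which $K_1$ carries a kink $c$, and arrange that exactly one endpoint of a linking chord between $K_1$ and $K_2$ lies in the left arc of that kink. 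Then $Ind'(c)=\pm 1$ rather than $0$, while the classical computation regarding $K_1$ alone still gives $Ind(c)=0$.

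First I would fix the local picture of the kink: the chord $c$ has two endpoints that are adjacent on the circle of $K_1$, so the short arc between them (the interior of the kink) is available to host one endpoint of another chord. I would then route a linking crossing between $K_1$ and $K_2$ so that precisely one of its endpoints sits on that short arc, compute the endpoint sign from Figure \ref{endpt}, and read off $Ind'(c)=\pm1$. To clinch non-invariance I would slide the linking strand across the kink by an $\Omega_1$ move (or simply compare with the diagram in which the kink has no linking endpoint in its interior), obtaining a second diagram where $Ind'(c)=0$; since $0\neq\pm1$, no fixed value $f(L)$ can work, so axiom (1) fails. I expect this construction to be cleanest if I reuse the concrete diagram of Example \ref{eg1}, which the text has promised already illustrates the interrelationship between self and linking crossings, so I would point the reader there rather than redrawing everything.

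The main obstacle I anticipate is purely bookkeeping rather than conceptual: one must be careful that the move producing the second diagram is a legitimate $\Omega_1$ move on the \emph{self} crossing $c$ and does not inadvertently alter $K_1$ as an abstract knot (so that $Ind(c)$ genuinely stays $0$ and only the linking contribution changes), and one must get the endpoint signs of Figure \ref{endpt} right so that the nonzero value is unambiguous. Once a single honest pair of diagrams with differing $Ind'(c)$ is in hand, the failure of axiom (1) is immediate, and the proof is complete.
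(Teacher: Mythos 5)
Your proposal is correct and follows essentially the same route as the paper: observe $Ind(c)=0$ for any $\Omega_1$ crossing, then produce two diagrams of the same link, related by $\Omega_1$ and its inverse, in which the kink crossing encloses linking-chord endpoints in one diagram but not the other, so its $Ind'$ value changes and no fixed $f(L)$ can exist. The only (immaterial) difference is quantitative --- the paper's Figure \ref{noaxiom} has the kink swallow \emph{all} linking endpoints of $K_i$, giving the pair of values $span_{K_{i}}$ versus $0$, whereas you place a single linking endpoint inside the kink to get $\pm1$ versus $0$; also note that your phrase ``slide the linking strand across the kink by an $\Omega_1$ move'' is loose (that slide is not itself an $\Omega_1$ move), but your parenthetical comparison of the two kink positions is exactly the paper's ``performing $\Omega_{1}$ and its inverse.''
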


Before proving Proposition \ref{difference}, let me introduce the signed span for $K_{i}$, which will also be used later.

\begin{definition}

Let $\{ e_{1}, \cdots, e_{p}\}$ be the set of endpoints on $K_{i}$ such that $e_{j}$ is an endpoints of some linking crossing point for all $j\in \{1, \cdots, p\}$. The signed span of $K_{i}$ is defined by $$span_{K_{i}}=\sum_{j=1}^{p}sign(e_{j})$$

\end{definition}

\begin{rk}

The set $\{span_{K_{i}}| i=1, \cdots, n\}$ is an invariant of $L$. The proof is a direct verification of the invariance under Reidemeister moves.

\end{rk}

Proof of Proposition \ref{difference}:
\begin{proof}

It is easy to see $Ind(c)=0$ if $c$ is the real crossing point involved in $\Omega_{1}$.

In Figure \ref{noaxiom}, the left diagram is obtained from the right diagram by performing $\Omega_{1}$ and its inverse. However, $Ind'(c)=0$ while $Ind'(c')=span_{K_{i}}$.

\end{proof}

\begin{figure}
 \centering
 \includegraphics[height =2cm, width =4cm]{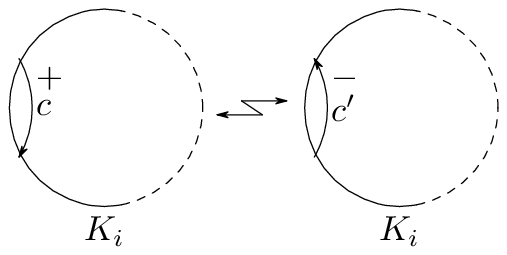}
 \caption{$Ind'$ does not satisfy (1)}\label{noaxiom}
\end{figure}

It is true that $Ind'(c)$ satisfies the rest of chord index axioms. The proof is not hard but troublesome. This is because Proposition \ref{difference} causes a trouble that we can not directly use Lemma \ref{Polyak}. It is easier to prove it satisfies the weak chord index axioms.

\begin{Lemma}\label{wci}
$Ind'$ is a weak chord index.

\end{Lemma}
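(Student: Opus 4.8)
The plan is to verify that $Ind'$ satisfies each of the four weak chord index axioms $(2')$, $(3')$, $(4)$, and $(5)$ directly, using the combinatorial description of $Ind'(c)$ as a signed count of chord endpoints lying in $left(c)$. The key observation that makes this manageable is that $Ind'$ differs from $Ind$ only in that $left(c)$ may now contain endpoints of linking chords as well as self chords; the contribution of the self chords is already controlled by the known behavior of $Ind$, so the real work is bookkeeping the linking-chord endpoints under each local move. Because the weak axioms only concern the generators $\Omega_{2a}$ and $\Omega_{3a}$ of the classical moves (Lemma \ref{Polyak}) together with $\Omega_3^v$ and the ``distant crossing'' axiom $(5)$, I avoid having to treat all variants of $\Omega_1$, $\Omega_2$, $\Omega_3$ — which is exactly the difficulty flagged by Proposition \ref{difference}.

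First I would handle axiom $(5)$, the easiest: if a generalized Reidemeister move is performed in a region disjoint from the arc $left(c)$ and from $c$ itself, then the set of endpoints in $left(c)$ and their signs are literally unchanged, so $Ind'(c)$ is preserved. One must also check the case where the move occurs inside a strand passing through $left(c)$ but does not alter which endpoints lie there or their signs; for virtual and semi-virtual moves this is immediate since virtual crossings contribute no chords, and for a distant $\Omega_1$, $\Omega_{2a}$, or $\Omega_{3a}$ the created/destroyed endpoints either both lie in $left(c)$ with canceling signs or both lie outside it. Next I would treat axiom $(4)$ for $\Omega_3^v$: a purely virtual triple point rearranges only virtual crossings, hence introduces no new chord endpoints, and a careful check of the sign convention of Figure \ref{endpt} shows the signed count in $left(c)$ is invariant.

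The substantive cases are $(2')$ and $(3')$. For $(2')$, the two self crossings $c_1,c_2$ created in an $\Omega_{2a}$ move have chords that are parallel and oppositely signed; I would argue that their left parts $left(c_1)$ and $left(c_2)$ contain the same set of external endpoints (the two new chords being adjacent, each sits in the other's $left$ region in a controlled way), so that $Ind'(c_1)=Ind'(c_2)$, exactly as for $Ind$ — the presence of extra linking endpoints does not break this because both left parts see the same linking endpoints. For $(3')$, in the $\Omega_{3a}$ move one compares the three self crossings before and after the triangle move; here I would use the standard fact that an $\Omega_3$ move permutes three strands without changing, for each participating chord, the signed total of endpoints to its left, since the move only slides one strand across a crossing and the two endpoints it carries across enter/leave $left(c)$ with canceling signs. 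Again the linking endpoints are spectators that either all lie to the left or all do not, contributing equally before and after.

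The main obstacle I anticipate is $(3')$: one must set up the $\Omega_{3a}$ configuration of Figure \ref{Polyakresults} with explicit orientations, identify for each of the three self crossings which endpoints of the other two strands fall in its left part, and confirm the signed count is unchanged after the strand slides across. This requires a genuine case check on the local Gauss-diagram picture, and the bookkeeping of endpoint signs (via Figure \ref{endpt}) is where sign errors are most likely to creep in; the linking-crossing endpoints add an extra layer that must be shown to cancel or to lie symmetrically. I expect the argument to parallel Cheng's verification that $Ind$ satisfies $(3)$, with the added remark that any linking endpoint present in the triangle region contributes identically to the before and after pictures, so it does not affect the equality of indices.
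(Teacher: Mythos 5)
Your proposal is correct and takes essentially the same approach as the paper: invoke Polyak's Lemma \ref{Polyak} to reduce to the generators $\Omega_{1a}$, $\Omega_{1b}$, $\Omega_{2a}$, $\Omega_{3a}$, note that the virtual moves and $\Omega_{3}^{v}$ leave the Gauss diagram (hence $Ind'$) unchanged, and then verify $(2')$, $(3')$ and $(5)$ by sign cancellation of chord endpoints in the left parts, with the linking-chord endpoints tracked alongside the self-chord ones. The case check you anticipate but defer is exactly what the paper carries out, organized by how many link components the strands of $\Omega_{2a}$ (one or two) and $\Omega_{3a}$ (one, two --- in three subcases --- or three) belong to, with explicit Gauss diagram figures; the paper verifies one representative configuration in detail and declares the rest similar.
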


The proof of this lemma is left to Section \ref{tech}, as it is tedious and unilluminating. Readers are recommended to partly verify this lemma by themselves.


Thus the writhe polynomial for virtual links is defined as follows:

\begin{definition}
$$Wr_{L}(t)=\sum_{\substack{c\in S(L) \\ Ind(c)\neq 0}}\omega(c)t^{Ind'(c)},$$ where $\omega(c)$ denotes the writhe of $c$.

\end{definition}
We temporarily use the notation $Wr$ to denote the writhe polynomial for virtual links while use $W$ to denote it after Proposition \ref{simplep}.

\begin{rk}
There is another approach to define writhe polynomial for virtual links. The proof of Proposition \ref{difference} suggests we can turn $Ind'$ into chord index by taking values in $\mathbb{Z}_{span_{L}}$. Here $span_{L}$ is the greatest common divisor of $|span_{K_{1}}|$, $\cdots$, $|span_{K_{n}}|$. Then we have writhe polynomial $Wr_{L}(t)=\sum_{c\in S(L)}\omega(c)(t^{Ind'(c)}-1)$. But there is a shortcoming of this approach. The polynomial is 0 if there is a pair of relatively prime integers $|span_{K_{i}}|$ and $|span_{K_{j}}|$. Specially, if some $|span_{K_{i}}|$ is 1, then the polynomial is 0. On the other hand, one can easily enhance the invariant into a polynomial in $t_{1}, \cdots, t_{n}$ if the virtual link $L$ is ordered.

\end{rk}

Next theorem is a direct consequence of Theorem \ref{model} and Lemma \ref{wci}.

\begin{theorem}\label{mainresult}
$Wr_{L}(t)$ is an invariant of virtual links.
\end{theorem}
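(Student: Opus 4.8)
The plan is to derive Theorem \ref{mainresult} as an immediate corollary of the two preceding results, so the proof amounts to verifying that the hypotheses of Theorem \ref{model} are exactly met by $Wr_L(t)$. Theorem \ref{model} asserts that for \emph{any} weak chord index $wci$, the expression
$$G_L(t)=\sum_{\substack{c\in S(L)\\ Ind(c)\neq 0}}\omega(c)t^{wci(c)}$$
is an invariant of virtual links. Comparing this with the definition of $Wr_L(t)$, I see that the two sums are syntactically identical once we substitute $wci(c)=Ind'(c)$: both range over self crossing points $c\in S(L)$ with $Ind(c)\neq 0$, both weight by the writhe $\omega(c)$, and both place a weak chord index in the exponent of $t$.

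The one thing that must be checked before the substitution is legitimate is that $Ind'$ genuinely is a weak chord index, i.e.\ that it satisfies axioms (2'), (3'), (4), and (5) of the weak chord index axioms. This is precisely the content of Lemma \ref{wci}, which I am entitled to assume. So the proof I would write is essentially one line: by Lemma \ref{wci}, $Ind'$ is a weak chord index, hence taking $wci=Ind'$ in Theorem \ref{model} shows that $Wr_L(t)=\sum_{c\in S(L),\,Ind(c)\neq 0}\omega(c)t^{Ind'(c)}$ is an invariant of virtual links.

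I would be careful to flag one subtle point that Theorem \ref{model} already handles for us: although Proposition \ref{difference} shows $Ind'$ fails the first chord index axiom $(\Omega_1)$, this failure does \emph{not} obstruct invariance, because the summation condition $Ind(c)\neq 0$ annihilates exactly the crossing created by an $\Omega_1$ move (since such a crossing has $Ind(c)=0$, as noted in the proof of Proposition \ref{difference}). Theorem \ref{model} was formulated precisely to accommodate weak chord indices under this restricted summation, so the $\Omega_1$ case is absorbed into its proof rather than requiring separate treatment here.

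There is no real obstacle in this argument: all the genuine work has been exported to Theorem \ref{model} (the case-by-case verification over Polyak's generating set of Reidemeister moves) and to Lemma \ref{wci} (the tedious check that $Ind'$ obeys the weak chord index axioms, deferred to Section \ref{tech}). The only thing to get right is the bookkeeping that the two summation conventions line up, which they do verbatim. Accordingly I would present the proof as a short invocation of the two cited results, perhaps with a sentence reminding the reader why dropping axiom (1) is harmless given the $Ind(c)\neq 0$ restriction.
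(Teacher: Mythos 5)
Your proof is correct and matches the paper's own argument exactly: the paper derives Theorem \ref{mainresult} as a direct consequence of Theorem \ref{model} applied with $wci=Ind'$, justified by Lemma \ref{wci}. Your additional remark about the $Ind(c)\neq 0$ condition absorbing the failure of axiom (1) is accurate and consistent with how Theorem \ref{model} was designed.
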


The remainder of this section is to discuss some properties of $Wr_{L}(t)$.

\begin{proposition}\label{simplep}
(a) If $L$ is a classical link, then $Wr_{L}(t)=0$,

(b) If $L$ is a virtual knot $K$, then $Wr_{L}(t)=W_{K}(t)$,

(c) If $Wr_{L}(t)$ equals $\sum_{i=1}^{m}a_{i}t^{i}$, then $\#S(D)\geq \sum_{i=1}^{m}|a_{i}|$, $D$ is an arbitrary diagram representing $L$, $\#S(D)$ is the cardinal of set $S(D)$.

\end{proposition}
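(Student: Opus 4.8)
The plan is to treat the three parts separately, invoking the invariance established in Theorem \ref{mainresult} only where it is actually needed, namely for part (c). For part (b), observe that when $L$ is a single-component diagram $K$ there are no linking crossing points at all, so $left(c)$ in Definition \ref{ind for link} contains exactly the same endpoints as $left(c)$ in Definition \ref{Ind for knot}. Hence $Ind'(c)=Ind(c)$ for every $c\in S(K)=C(K)$, and substituting this coincidence into the definition of $Wr_{L}(t)$ reproduces $W_{K}(t)$ verbatim. This part is immediate once the equality of the two left-parts is noted.

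For part (a), I would argue that the index set defining $Wr_{L}(t)$ is empty. If $L$ is classical, then each component $K_{i}$, regarded on its own, is a classical knot diagram; since $Ind(c)$ of a self crossing $c\subset K_{i}$ is by convention computed inside $K_{i}$, it suffices to recall that every crossing of a classical knot diagram has vanishing index. Thus no $c\in S(L)$ satisfies $Ind(c)\neq 0$, the defining sum is empty, and $Wr_{L}(t)=0$.

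For part (c), I would use that $Wr_{L}(t)$ may be evaluated on any representative $D$ by Theorem \ref{mainresult}. Grouping the defining sum by the value of the exponent, the coefficient of $t^{i}$ is $a_{i}=\sum \omega(c)$, where $c$ ranges over those self crossings of $D$ with $Ind(c)\neq 0$ and $Ind'(c)=i$. Since each $\omega(c)=\pm 1$, the triangle inequality gives $|a_{i}|\leq \#\{c\in S(D): Ind(c)\neq 0,\ Ind'(c)=i\}$. As $i$ varies these sets are pairwise disjoint subsets of $S(D)$, so summing over $i$ yields $\sum_{i}|a_{i}|\leq \#S(D)$, which is the claimed bound.

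The main obstacle is the input to part (a): one must justify that $Ind(c)=0$ at every crossing of a classical knot diagram. I would supply this through the standard interpretation of $Ind(c)$ as a signed intersection count that vanishes for planar, sphere-realizable diagrams, equivalently through the realizability of the associated Gauss diagram. Once this fact is in hand, parts (a), (b) and (c) are all routine bookkeeping built on the definitions of $Ind'$, $Ind$ and $Wr_{L}(t)$.
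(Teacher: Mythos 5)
Your proposal is correct and follows essentially the same route as the paper: part (a) via the well-known vanishing of $Ind(c)$ on classical knot diagrams applied componentwise, and parts (b) and (c) by unwinding the definitions (with the invariance from Theorem \ref{mainresult} implicitly needed for the ``arbitrary diagram'' claim in (c)). You simply spell out the bookkeeping that the paper leaves to the reader.
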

\begin{proof}
For (a), it is well-known every crossing point $c$ in a classical knot has $Ind(c)=0$. Then we have $Wr_{L}(t)=0$ for classical link.

(b), (c) can be seen from the definitions.

\end{proof}

Proposition \ref{simplep} implies that $Wr$ is a generalization of $W$. Hence, we use $W$ to denote the writhe polynomial instead of $Wr$.

The \emph{self crossing number} of $L$ is the minimum of number of self crossing points among diagrams representing $L$, which is also an invariant of $L$. Proposition \ref{simplep} shows $W_{L}(t)$ gives a lower bound of this number.

The next property has special status in this paper. I am about to write it exclusively.

For an $n$-component link $L=K_{1}\cup \cdots \cup K_{n}$, we can split the writhe polynomial $W_{L}(t)$ into $\sum_{i=1}^{n}W_{i}(t)$, where $W_{i}(t)=\sum_{\substack{Ind(c)\neq 0\\ c\in S(K_{i})}}\omega(c)t^{Ind'(c)}$. We set $$\overline{W}_{L}(t)=\sum_{i=1}^{n}\overline{W}_{i}(t),$$ where $\overline{W}_{i}(t)=W_{i}(t)-t^{span_{K_{i}}}W_{i}(t^{-1})$. When $L$ is a virtual knot, it is easy to see that $\overline{W}_{L}(t)$ is the same asp the polynomial in Proposition \ref{ftl}.

\begin{proposition}\label{flatweight}
$\overline{W}_{L}(t)$ is a flat invariant.

\end{proposition}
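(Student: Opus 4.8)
The plan is to show that $\overline{W}_{L}(t)$ is a flat invariant by verifying, in accordance with the criterion established just before Proposition \ref{ftl}, that it is unchanged when we pass to the shadow, equivalently that it is invariant under an arbitrary crossing change. Since $\overline{W}_{L}(t)=\sum_{i=1}^{n}\overline{W}_{i}(t)$ and each $\overline{W}_{i}(t)=W_{i}(t)-t^{span_{K_{i}}}W_{i}(t^{-1})$ depends only on the self crossings lying in the single component $K_{i}$, it suffices to analyze how a single crossing change affects the relevant data. I would separate the analysis into two cases according to the type of the crossing being switched.

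First I would handle the switch of a self crossing point $c$ belonging to some component $K_{i}$. Following the computation in the proof of Proposition \ref{ftl}, switching $c$ reverses the arrow on the chord $c$ and flips its sign, while leaving all other chords untouched; in particular the endpoints of $c$ are the only endpoints whose sign changes. The key observation is that reversing $c$ sends $Ind'(c)$ to $-Ind'(c)$ and sends $\omega(c)$ to $-\omega(c)$, exactly as for $Ind(c)$ in the knot case, since the linking endpoints that $left(c)$ may now contain keep their signs under this operation. Moreover $span_{K_{i}}$ is unaffected, because it is a sum of signs of \emph{linking} endpoints and a self-crossing switch changes no linking-endpoint signs. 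Hence the contribution of $c$ to $W_{i}(t)$ goes from $\omega(c)t^{Ind'(c)}$ to $-\omega(c)t^{-Ind'(c)}$, and one checks that the matching term in $-t^{span_{K_{i}}}W_{i}(t^{-1})$ transforms compensatingly, so that the whole expression $\overline{W}_{i}(t)$ is preserved; here the factor $t^{span_{K_{i}}}$ is precisely what is needed to absorb the shift that the linking endpoints in $left(c)$ introduce relative to the pure-knot situation of Proposition \ref{ftl}.

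The genuinely new case, and the one I expect to be the main obstacle, is the switch of a \emph{linking} crossing point $c$ joining $K_{i}$ and $K_{j}$ with $i\neq j$. Such a switch changes the signs of the two endpoints of $c$, one of which lies on $K_{i}$ and the other on $K_{j}$, and these are linking endpoints, so it perturbs both $span_{K_{i}}$ and $span_{K_{j}}$, and simultaneously it can alter $Ind'(a)$ for self crossings $a$ whose left region $left(a)$ contains the relocated endpoint. The delicate point is to show that the change in the exponents $Ind'(a)$ across the self crossings of $K_{i}$ is exactly accounted for by the change in $span_{K_{i}}$ appearing in the factor $t^{span_{K_{i}}}$ in $\overline{W}_{i}(t)$, and likewise for $K_{j}$. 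I would argue this by tracking the signed count: reversing the linking chord $c$ changes $span_{K_{i}}$ by a definite amount $2\,sign(e)$ where $e$ is the endpoint of $c$ on $K_{i}$, and the same endpoint, viewed as lying in $left(a)$ or not, shifts $Ind'(a)$ by the same increment for every self crossing $a$ of $K_{i}$ whose left region is affected uniformly. The substitution $t\mapsto t^{-1}$ together with the overall prefactor $t^{span_{K_{i}}}$ is designed so that these two effects cancel; making this cancellation precise — in particular confirming that the shift in $Ind'(a)$ is uniform across all contributing self crossings of $K_{i}$ and matches the shift in $span_{K_{i}}$ — is the crux of the argument. Once this bookkeeping is carried out for both $K_{i}$ and $K_{j}$, each $\overline{W}_{i}(t)$ is seen to be invariant, hence so is their sum $\overline{W}_{L}(t)$, completing the proof.
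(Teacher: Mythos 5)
Your overall strategy (reduce flat invariance to invariance under a single crossing change, then split into self and linking cases) is the same as the paper's, but both of your cases rest on incorrect claims about how the endpoint signs of Figure \ref{endpt} behave under a switch, and the gap is fatal as written. The one fact that drives the paper's proof is this: switching a crossing reverses the arrow \emph{and} negates the writhe, and the sign convention for endpoints is exactly invariant under this double change, so the signs of the two endpoints of the switched chord are \emph{unchanged}. Two consequences follow. First, $span_{K_j}$ is unchanged for every $j$ and $Ind'(a)$ is unchanged for every self crossing $a$ other than the switched one; in particular your ``genuinely new case'' of switching a linking crossing is completely trivial (linking crossings contribute nothing to $W$, and nothing else moves), not the main obstacle. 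Second, for a switched self crossing $c$ on $K_{i}$ your transformation law is wrong: since $left(c')$ is the complementary arc of $left(c)$, and the endpoint signs on $K_{i}$ sum to $span_{K_{i}}$ (each self chord contributes a cancelling pair, each linking chord one signed endpoint), one gets $Ind'(c)+Ind'(c')=span_{K_{i}}$, \emph{not} $Ind'(c')=-Ind'(c)$. Indeed the reason you give --- that linking endpoints keep their signs --- proves the paper's relation, not yours.

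The error is not cosmetic. With your law $Ind'(c')=-Ind'(c)$, the contribution of $c$ to $\overline{W}_{i}(t)$ changes from $\omega(c)\bigl(t^{I}-t^{s-I}\bigr)$ to $\omega(c)\bigl(t^{s+I}-t^{-I}\bigr)$, where $I=Ind'(c)$ and $s=span_{K_{i}}$; these agree only when $s=0$, i.e.\ essentially only in the knot case of Proposition \ref{ftl}. With the correct law $I'=s-I$ the two monomials in $\omega(c)\bigl(t^{I}-t^{s-I}\bigr)$ simply swap (up to the sign $\omega(c')=-\omega(c)$), which is precisely why the prefactor $t^{span_{K_{i}}}$ appears in the definition of $\overline{W}_{i}$. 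Your proposed mechanism for the linking case is likewise unsalvageable on its own terms: even if switching a linking crossing did flip the sign of its endpoint $e$ on $K_{i}$, the resulting shift in $Ind'(a)$ would affect only those self crossings $a$ with $e\in left(a)$, so it could not be ``uniform across all contributing self crossings'' and could not match the global shift of $span_{K_{i}}$ appearing in every term. The repair is to replace both of your key claims by the single observation about sign invariance of endpoints under a switch, after which the proof reduces to the short computation above.
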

\begin{proof}
Let $L'=K'_{1}\cup \cdots \cup K'_{n}$ be a link diagram obtained by switching a crossing point $c$ on $L$. Let $c'$ denote the corresponding crossing point of $c$ on $L'$. For any self crossing point $a$ on $L$ different from $c$, the corresponding crossing point of $a$ on $L'$ is denoted by $a'$. Then the Gauss diagram of $L'$ is obtained by reversing orientation of chord $c$ and alternating the sign of chord $c$ on Gauss diagram of $L$. So, signs of endpoints of $c'$ are the same as those of endpoints of $c$.

 Then, $span_{K_{j}}=span_{K'_{j}}$ ($j\in \{1, \cdots, n\}$) and $Ind'(a)=Ind'(a')$. Hence, $a$ and $a'$ have the same contributions i.e. $\omega(a)t^{Ind'(a)}-\omega(a)t^{span_{K_{i}}-Ind'(a)}$ to $\overline{W}_{L}(t)$ and $\overline{W}_{L'}(t)$ respectively.

Suppose $c$ is a linking crossing point, $c$ and $c'$ have no contribution to $\overline{W}_{L}(t)$ and $\overline{W}_{L'}(t)$ respectively by the definition of $W$.

 Suppose $c$ is a self crossing point on $K_{i}$. The contributions of self crossing points to $Ind'(c)$ and the contributions of linking crossing points to $Ind'(c)$ are denoted by $A$ and $B$ respectively. $Ind'(c')$ also has a similar division $Ind'(c')=A'+B'$.

  Let $\{e_{1}, \cdots, e_{p}, e_{p+1}, \cdots, e_{p+q}\}$ be the set of endpoints on $K_{i}$, where $e_{j}$ is an endpoint of some linking crossing point for $j\in\{1, \cdots, p\}$ and an endpoint of some self crossing point for $j\in \{p+1, \cdots, p+q\}$.

We have following equations: $$B+B'=\sum_{j=1}^{p}sign(e_{j})=span_{K_{i}},$$  $$A+A'=\sum_{j=p+1}^{p+q}sign(e_{j})=0.$$ So, $Ind'(c')+Ind'(c)=span_{K_{i}}$. The contribution of $c$ to $\overline{W}_{L}(t)$ is $\omega(c)t^{Ind'(c)}-\omega(c)t^{span_{K_{i}}-Ind'(c)}=-\omega(c')t^{span_{K_{i}}-Ind'(c)}+\omega(c')t^{Ind'(c')}$, which is also the contribution of $c'$ to $\overline{W}_{L'}(t)$.

\end{proof}

The following proposition is a byproduct of above proof.

\begin{proposition}

$W_{\overline{L}}(t)=\sum_{i=1}^{n}-t^{span_{K_{i}}}W_{i}(t^{-1}) $, where $\overline{L}$ is obtained from $L$ by switching all real crossing points.

\end{proposition}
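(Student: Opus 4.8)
The plan is to piggyback on the proof of Proposition \ref{flatweight}, where essentially all the relevant computation has already been carried out. The statement concerns $\overline{L}$, obtained by switching \emph{all} real crossing points of $L$, so I would like to understand how switching a single self crossing point transforms the individual summands $W_i(t)$, and then account for the effect of switching the linking crossing points as well.

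First I would recall from the preceding proof the two facts established there for a single crossing switch: if $c$ is a self crossing point on $K_i$ and $c'$ is its image after the switch, then $Ind'(c')+Ind'(c)=span_{K_i}$, and the writhe changes sign, $\omega(c')=-\omega(c)$. The key point is that switching any crossing leaves all the quantities $span_{K_j}$ unchanged (since the signs of endpoints of a chord are unaffected by reversing the arrow and flipping its sign), and leaves $Ind'(a)$ unchanged for every self crossing $a\neq c$. Consequently the contribution of $c$ to the summand $W_i$ for $\overline{L}$ is $\omega(c')t^{Ind'(c')}=-\omega(c)t^{span_{K_i}-Ind'(c)}$, which is precisely one term of $-t^{span_{K_i}}W_i(t^{-1})$.

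Next I would assemble these term-by-term statements. Since switching \emph{all} real crossings switches, in particular, every self crossing of each component $K_i$, the entire polynomial $W_i(t)=\sum_{c\in S(K_i),\,Ind(c)\neq 0}\omega(c)t^{Ind'(c)}$ is transformed term by term into $\sum_c -\omega(c)t^{span_{K_i}-Ind'(c)}=-t^{span_{K_i}}\sum_c\omega(c)t^{-Ind'(c)}=-t^{span_{K_i}}W_i(t^{-1})$. One must also check that the index set of the sum is preserved, i.e.\ that the condition $Ind(c)\neq 0$ is equivalent to $Ind(c')\neq 0$; this follows because $Ind$ (computed within the single component $K_i$) is a genuine chord index and switching a crossing sends $Ind(c)$ to $-Ind(c)$, so it vanishes for $c$ exactly when it vanishes for $c'$. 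Summing over $i$ from $1$ to $n$ then gives $W_{\overline{L}}(t)=\sum_{i=1}^n -t^{span_{K_i}}W_i(t^{-1})$, as claimed.

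The one point requiring a little care — and the step I expect to be the main obstacle — is confirming that switching the \emph{linking} crossing points of $L$ does not disturb the self-crossing sum. Linking crossings contribute to $Ind'(c)$ through the term $B$ (the sum of signs of endpoints of linking crossings lying in $left(c)$), so a priori reversing their arrows might change $Ind'(c)$. I would argue that a switch of a linking crossing preserves $span_{K_i}$ and also preserves $Ind'(c)$ for every self crossing $c$, because, as noted in the earlier proof, the \emph{signs} of the endpoints of a chord depend only on the geometry of the chord and not on its orientation or sign label, which are the only data altered by a switch. Since $Ind'(c)$ is a sum of such endpoint signs, it is invariant under switching any linking crossing, and therefore the transformation of $W_i$ is governed entirely by the self-crossing switches analysed above. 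This closes the argument.
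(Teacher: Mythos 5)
Your proposal is correct and takes essentially the same route as the paper, which presents this proposition precisely as a byproduct of the proof of Proposition \ref{flatweight}: you make explicit the facts recorded there (switching preserves endpoint signs, hence all $span_{K_{j}}$ and the indices of unswitched crossings, while the switched crossing satisfies $Ind'(c')=span_{K_{i}}-Ind'(c)$ and $\omega(c')=-\omega(c)$), and then iterate over all crossings. One small correction in wording: the endpoint signs are \emph{not} independent of a chord's orientation and sign, as you claim in your last paragraph; rather, they are invariant under the simultaneous arrow reversal and sign change that a crossing switch performs, which is exactly the fact the paper's earlier proof establishes and suffices for your argument.
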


Let us end this section by an example.

\begin{example}\label{eg1}
\begin{figure}
\centering
 \subfigure{\includegraphics[height = 2cm, width = 3cm]{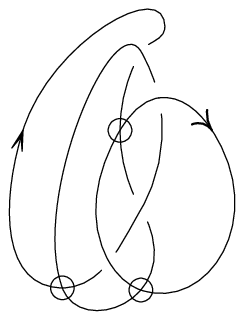}}
\ \
 \subfigure{\includegraphics[height = 2cm, width = 2cm]{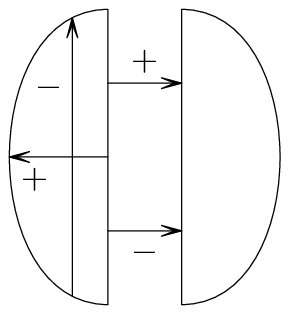}} \caption{Example \ref{eg1}}\label{fig_of_eg1}

\end{figure}
Let $L$ be the virtual link depicted in Figure \ref{fig_of_eg1}. Then $W_{L}(t)$ equals $t^{2}-t$ while $Q_{L}(x,y)$ equals 0. This shows that $W_{L}(t)$ indeed includes some new information of virtual links.  The self crossing number of $L$ is 2 by (c) of Proposition \ref{simplep}.

Moreover, $\overline{W}_{L}(t)$ equals $t^{2}-t+t^{-1}-t^{-2}$. This shows the shadow $F(L)$ of $L$ is nontrivial by Proposition \ref{flatweight}.

\end{example}

\section{Some polynomials of virtual links arising from $W_{L}$}\label{Kishino}

Proposition \ref{simplep} suggests that the writhe polynomial we defined coincides with the writhe polynomial defined in \cite{Z.Cheng2013} for any virtual knot $K$. It seems we can not use $W_{L}$ to get new information of virtual knots. However, combining with results of \cite{Z.Cheng.2018}, we can use $W_{L}$ to construct two transcendental polynomial invariants $\mathcal{L}_{L}(t,s)$ and $B_{L}(t,s)$ of virtual links.  Moreover, we can use $B_{L}(t,s)$ to construct a flat invariant $\overline{B}_{L}(t,s)$. Then we can use $B_{K}(t,s)$ to give a new proof of virtuality of Kishino knot (Figure \ref{Kishinoknot}), which can not be detected by any variant of $W(t)$ including itself.

\begin{figure}
 \centering
 \includegraphics[height =2cm, width =8cm]{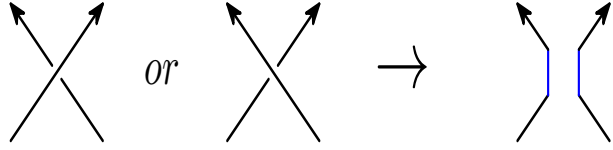}
 \caption{1-smoothing}\label{1smoothing}
\end{figure}

First, let us recall the invariant $\mathcal{L}(L)$, which appeared in \cite{Z.Cheng.2018}. $L$ is an $n$-component virtual link.  Let $\mathcal{M}$ be the free $\mathbb{Z}$-module generated by all oriented flat virtual links. $L_{c}$ is a virtual link obtained from $L$ by performing 1-smoothing at $c$ (Figure \ref{1smoothing}). Here $c$ is a real crossing point of $L$. $\widetilde{L}_{c}$ is the shadow of $L_{c}$.

\begin{theorem}\upcite{Z.Cheng.2018}\label{Lpoly}
$\mathcal{L}(L)=\sum_{c}\omega(c)\widetilde{L}_{c}-\omega(L)(\widetilde{L}\cup \mathcal{U})$ defines an $\mathcal{M}$-valued invariant of virtual links, where $\omega(L)$ is the sum of writhes of all crossing points. $\mathcal{U}$ is a trivial component of flat virtual link i.e. an unknot, which has no intersections with other components.

\end{theorem}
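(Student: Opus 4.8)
I would prove Theorem \ref{Lpoly} by checking the invariance of $\mathcal{L}(L)$ directly on a generating set of moves, using crucially that $\mathcal{M}$ is generated by \emph{flat} virtual links: any local diagrammatic change that becomes a flat Reidemeister move after passing to shadows leaves the corresponding generator of $\mathcal{M}$ untouched. By Lemma \ref{Polyak} it suffices to treat the classical generators $\Omega_{1a},\Omega_{1b},\Omega_{2a},\Omega_{3a}$, together with the virtual move $\Omega_{3}^{v}$ and the semi-virtual move handled directly. First I would dispose of the easy contributions. For a crossing $a$ not involved in the move, the smoothing $\widetilde{L}_{a}$ changes only inside the disk where the move occurs, and that change is itself a flat Reidemeister move; hence $\widetilde{L}_{a}$ is the same generator before and after. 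The virtual and semi-virtual moves are treated identically, since they are among the defining moves of flat virtual links, so they alter neither any $\widetilde{L}_{a}$ nor the total writhe $\omega(L)$, and likewise leave $\widetilde{L}$ and $\widetilde{L}\cup\mathcal{U}$ fixed in $\mathcal{M}$.

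For $\Omega_{2a}$ the move creates two crossings $c_{1},c_{2}$ with $\omega(c_{1})=-\omega(c_{2})$, so $\omega(L)$ and the correction term $\omega(L)(\widetilde{L}\cup\mathcal{U})$ are unchanged. It then remains only to see that the two new summands cancel, i.e. that $\widetilde{L}_{c_{1}}=\widetilde{L}_{c_{2}}$ in $\mathcal{M}$: smoothing either crossing of the bigon and flattening the remaining one yields flat diagrams that differ by a flat $\Omega_{2}$ (or are isotopic), so they are the same generator, and since $\omega(c_{1})+\omega(c_{2})=0$ the two contributions $\omega(c_{i})\widetilde{L}_{c_{i}}$ cancel. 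For $\Omega_{3a}$ the three crossings survive the move and $\omega(L)$ is unchanged, so one only needs that the smoothing $\widetilde{L}_{c}$ of each of the three crossings $c$ is preserved; smoothing one crossing converts the triangle move into a flat move on the remaining two strands, so each $\widetilde{L}_{c}$ is the same generator on both sides.

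The role of the correction term $-\omega(L)(\widetilde{L}\cup\mathcal{U})$ becomes visible in $\Omega_{1}$, which is the subtle case. Adding a curl via $\Omega_{1a}$ or $\Omega_{1b}$ introduces a single crossing $c$ with $\omega(c)=\varepsilon=\pm1$ and changes $\omega(L)$ by exactly $\varepsilon$. Since $L_{c}$ is again oriented, the $1$-smoothing respects orientation, and the oriented smoothing of a curl pinches the loop off as a disjoint trivial circle while leaving the main strand straight; therefore $\widetilde{L}_{c}=\widetilde{L}\cup\mathcal{U}$ in $\mathcal{M}$, where $\widetilde{L}$ is the shadow after the curl is removed (equal to the shadow before it by a flat $\Omega_{1}$). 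Thus the new summand $\omega(c)\widetilde{L}_{c}=\varepsilon(\widetilde{L}\cup\mathcal{U})$ is cancelled precisely by the change $-\varepsilon(\widetilde{L}\cup\mathcal{U})$ of the correction term, which is exactly why that term is present and why it is weighted by $\omega(L)$.

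I expect the main obstacle to be the bookkeeping in $\Omega_{3a}$: one must carefully draw the oriented $1$-smoothing of each of the three participating crossings on both sides of the triangle move and recognize the resulting flat diagrams as equal generators of $\mathcal{M}$. A secondary, pervasive subtlety is that every "unchanged shadow" claim must be justified at the level of $\mathcal{M}$ rather than of diagrams, i.e. one must confirm that the local diagrammatic change always descends to a flat Reidemeister move on the smoothed picture; once this is verified for $\Omega_{2a}$ and $\Omega_{3a}$, the remaining moves follow routinely.
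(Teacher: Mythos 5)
The paper itself contains no proof of this statement: Theorem \ref{Lpoly} is imported from \cite{Z.Cheng.2018} (and, as Remark \ref{Kishinork} admits, is only ``suggested by Remark 3.15'' there), the paper's role for it being an instance of the general principle of Section \ref{Axioms_index} --- a chord index $ind$ with fixed $\Omega_{1}$-value $f(L)$ yields the invariant $\sum_{c}\omega(c)(ind(c)-f(L))$ --- applied to the cited fact that $c\mapsto \widetilde{L}_{c}$ is a chord index. Your proof is correct and supplies precisely this missing content in self-contained form: reducing to Polyak's generators via Lemma \ref{Polyak}, checking that non-involved crossings and the (semi-)virtual moves only change shadows by flat moves, that the $\Omega_{2a}$ pair has equal smoothed shadows and cancelling writhes, that the three $\Omega_{3a}$ smoothings are preserved, and --- the key point --- that the $\Omega_{1}$ crossing has the \emph{fixed} index $\widetilde{L}\cup\mathcal{U}$, which is exactly what the correction term $-\omega(L)(\widetilde{L}\cup\mathcal{U})$ is built to absorb. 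This is the same argument pattern the paper uses for Theorem \ref{model}, so your route coincides with the intended one rather than diverging from it. One small imprecision: in the $\Omega_{2a}$ case, smoothing one crossing of the bigon turns the other into a flat \emph{curl}, so the two smoothed shadows are identified by flat $\Omega_{1}$ moves (or an isotopy sliding the curl), not by a flat $\Omega_{2}$; since flat $\Omega_{1}$ is among the flat moves, the conclusion $\widetilde{L}_{c_{1}}=\widetilde{L}_{c_{2}}$ in $\mathcal{M}$ stands unaffected.
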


\begin{rk}\label{Kishinork}
The statement of Theorem \ref{Lpoly} was not spelled out in \cite{Z.Cheng.2018}. It was suggested by Remark 3.15 of \cite{Z.Cheng.2018}.

 Just as (c) of Proposition \ref{simplep}, we can use $\mathcal{L}(L)$ to obtain a lower bound of real crossing number (i.e. the minimum of number of real crossing points among diagrams representing $L$). And it is easy to see $\mathcal{L}(K)=0$ for classical knot $K$ for switching crossing point is an unlinking and unknotting operation.

\end{rk}


\cite{Z.Cheng.2018} proved that $\widetilde{L}_{c}$ is a chord index. And $W$ provides us a flat invariant $\overline{W}_{L_{c}}(t)$ and hence an invariant of $\widetilde{L}_{c}$. So, $\overline{W}_{L_{c}}(t)$ can be seen as an index function. It automatically satisfies chord index axioms except (1). In fact, it does satisfy (1).

\begin{Lemma}\label{flatW}

$\overline{W}_{L_{c}}(t)$ is a chord index.

\end{Lemma}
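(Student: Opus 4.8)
The claim is that $\overline{W}_{L_c}(t)$, viewed as an index assigned to each real crossing point $c$ of $L$, satisfies \emph{all} the chord index axioms of Definition \ref{axiom}, including the first. The paper has already established (by citing \cite{Z.Cheng.2018}) that the shadow $\widetilde{L}_c$ is itself a chord index, meaning the flat virtual link type $\widetilde{L}_c$ satisfies axioms (1)--(5). Since $\overline{W}_{L_c}(t)$ is, by Proposition \ref{flatweight}, a \emph{flat} invariant of $L_c$, it factors through the flat virtual link $\widetilde{L}_c$. Thus the plan is to reduce the chord index axioms for $\overline{W}_{L_c}(t)$ to the corresponding axioms for $\widetilde{L}_c$, which are already known to hold.

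First I would make precise the composite structure: the assignment $c \mapsto \overline{W}_{L_c}(t)$ is the composition of the chord index $c\mapsto \widetilde{L}_c$ (with values in flat virtual links) followed by the map $\widetilde{L}_c \mapsto \overline{W}_{L_c}(t)$, which is well-defined precisely because $\overline{W}$ is a flat invariant (Proposition \ref{flatweight}). Since axioms (2)--(5) are stability statements asserting that certain crossings have equal or preserved indices under the moves, and since equal values of $\widetilde{L}_c$ force equal values of $\overline{W}_{L_c}(t)$, axioms (2)--(5) for $\overline{W}_{L_c}(t)$ follow immediately from the same axioms for $\widetilde{L}_c$. This part is essentially formal: any function of a chord index is automatically stable under the moves that stabilize that chord index.

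The genuinely new content, and what I expect to be the main obstacle, is axiom (1): the crossing $c$ involved in an $\Omega_1$ move must have a \emph{fixed} index with respect to $L$. For the shadow this holds because the $\Omega_1$ crossing produces a standard flat type; the subtlety is that in general a chord index can have a fixed value at the $\Omega_1$ crossing that is \emph{nonzero} (the axiom only demands it be constant, given by $f(L)$). So I would compute $\overline{W}_{L_c}(t)$ explicitly when $c$ is the kink crossing of an $\Omega_1$ move. Performing $1$-smoothing at such a kink splits off a small trivial loop, so $L_c$ is, up to the relevant moves, $L$ with an extra trivial component $\mathcal{U}$ (or a reconnection that does not affect the flat type in an essential way). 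I would then show $\overline{W}_{L_c}(t)$ equals a fixed element determined by $L$ alone, independent of where the kink is introduced. The key computation is that the split-off unknot $\mathcal{U}$ contributes nothing to any $W_i(t)$, since it has no self crossings and its addition changes neither the self-crossing indices $Ind'$ nor the signed spans $span_{K_i}$ of the other components; hence $\overline{W}_{L_c}(t)$ reduces to $\overline{W}_L(t)$ (or a similarly $L$-determined constant), verifying (1).

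The one point requiring care is confirming that $1$-smoothing at the $\Omega_1$ kink does not alter the signed spans $span_{K_i}$ or the values $Ind'$ of the surviving self crossings, because $\overline{W}$ depends on both the exponents $Ind'(c)$ and the shift exponents $span_{K_i}$. Since the kink chord contributes a single pair of endpoints whose signs cancel and which, after smoothing, lies on the newly created trivial loop disjoint from the rest, neither quantity changes for the other crossings. With this verified, axiom (1) holds with $f(L)$ equal to the $L$-determined constant just identified, and together with the formal verification of (2)--(5) this proves that $\overline{W}_{L_c}(t)$ is a chord index.
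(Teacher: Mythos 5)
Your proposal is correct and takes essentially the same route as the paper: axioms (2)--(5) are dispatched formally because $\overline{W}$ is a flat invariant (so $c\mapsto \overline{W}_{L_{c}}(t)$ factors through the chord index $c\mapsto \widetilde{L}_{c}$), and axiom (1) is verified by noting that $1$-smoothing the $\Omega_{1}$ kink yields $L_{c}=L\cup \mathcal{U}$ up to virtual isotopy, whence $\overline{W}_{L_{c}}(t)=\overline{W}_{L}(t)$. Your extra check that the split-off unknot changes neither the $Ind'$ values nor the signed spans merely fills in detail the paper leaves implicit in the phrase ``by definition of $\overline{W}$.''
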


\begin{proof}

It suffices to verify $\overline{W}_{L_{c}}(t)$ satisfies the axiom (1).
If $c$ is a crossing point involved in $\Omega_{1}$, then $L_{c}=L\cup \mathcal{U}$ in the sense of virtual isotopic class. Hence $\overline{W}_{L_{c}}(t)=\overline{W}_{L}(t)$ by definition of $\overline{W}$.

\end{proof}

 Therefore we have following theorem.

\begin{theorem}

$\mathcal{L}_{L}(t,s)=\sum_{c}\omega(c)t^{\overline{W}_{L_{c}}(s)}-\omega(L)t^{\overline{W}_{L}(s)}$ is an invariant of virtual links.

\end{theorem}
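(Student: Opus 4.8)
The plan is to prove invariance of $\mathcal{L}_{L}(t,s)$ by reducing it to the already-established machinery: namely, the invariant $\mathcal{L}(L)$ of Theorem \ref{Lpoly} combined with the fact, just proved in Lemma \ref{flatW}, that $\overline{W}_{L_{c}}(s)$ is a genuine chord index (satisfying all axioms of Definition \ref{axiom}, including the first). The key observation is that $\mathcal{L}_{L}(t,s)$ has exactly the shape of the formal polynomial invariant built from a chord index, as described in Section \ref{Poly}: one assigns to each real crossing point $c$ the index $\overline{W}_{L_{c}}(s)$ and then forms the weighted sum $\sum_{c}\omega(c)\bigl(t^{\overline{W}_{L_{c}}(s)}-t^{\overline{W}_{L}(s)}\bigr)$, since $\overline{W}_{L}(s)$ is precisely the fixed index $f(L)$ attached to the crossing involved in $\Omega_{1}$ (this is the content of the proof of Lemma \ref{flatW}). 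Expanding that formal sum gives $\sum_{c}\omega(c)t^{\overline{W}_{L_{c}}(s)}-\bigl(\sum_{c}\omega(c)\bigr)t^{\overline{W}_{L}(s)}=\sum_{c}\omega(c)t^{\overline{W}_{L_{c}}(s)}-\omega(L)t^{\overline{W}_{L}(s)}$, which is exactly $\mathcal{L}_{L}(t,s)$. So the statement is a special case of the general construction, and in principle one is done.

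First I would make the reduction above explicit, checking that the two ingredients are legitimately available: that $\overline{W}_{L_{c}}(s)$ is well-defined (a consequence of Theorem \ref{mainresult} and Proposition \ref{flatweight}, which guarantee $\overline{W}$ is an invariant and hence depends only on the virtual-isotopy class of $L_{c}$) and that it is a chord index (Lemma \ref{flatW}). Then I would invoke the general principle stated after Definition \ref{axiom}, that any chord index $ind$ produces an invariant $\sum_{c}\omega(c)(t^{ind(c)}-t^{f(L)})$. The invariance of that formal polynomial is verified exactly as in the proof of Theorem \ref{model}: under each generator of the oriented Reidemeister moves (Lemma \ref{Polyak}) the index values of crossings not destroyed by the move are preserved by axioms (2)--(5), the pair created or destroyed in $\Omega_{2}$ cancels, and the crossing in $\Omega_{1}$ contributes $t^{f(L)}-t^{f(L)}=0$ because $\overline{W}_{L_{c}}(s)=\overline{W}_{L}(s)=f(L)$ there.

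The one subtlety I would flag, and the step I expect to require the most care, is the bookkeeping of the writhe factor $\omega(L)$ together with the compensating term under the first Reidemeister move. Under $\Omega_{1}$ a crossing $c_{0}$ with $\omega(c_{0})=\pm1$ is created, so $\omega(L)$ changes by $\pm1$; I must confirm that the contribution of $c_{0}$, namely $\omega(c_{0})t^{\overline{W}_{L_{c_{0}}}(s)}$, exactly matches the corresponding change $\omega(c_{0})t^{\overline{W}_{L}(s)}$ in the subtracted term $\omega(L)t^{\overline{W}_{L}(s)}$. This works precisely because Lemma \ref{flatW} gives $\overline{W}_{L_{c_{0}}}(s)=\overline{W}_{L}(s)$ for the $\Omega_{1}$ crossing, so the two changes cancel; this is the only place where the full strength of Lemma \ref{flatW} (as opposed to a mere weak chord index) is used. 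For $\Omega_{2}$ the two created crossings have opposite writhes and equal indices by axiom (2), so both $\sum_{c}\omega(c)t^{\overline{W}_{L_{c}}(s)}$ and $\omega(L)$ are unchanged, and under $\Omega_{3}$, $\Omega_{3}^{v}$, and the virtual moves the crossing set and all indices are preserved by axioms (3)--(5). Assembling these cases over the generating set of Lemma \ref{Polyak} (and noting the virtual moves create or destroy no real crossings and leave $\overline{W}_{L_{c}}(s)$ unaffected) completes the verification that $\mathcal{L}_{L}(t,s)$ is invariant.
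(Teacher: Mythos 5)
Your proposal is correct and takes essentially the same route as the paper: the paper also obtains this theorem as an immediate consequence of Lemma \ref{flatW} (that $\overline{W}_{L_{c}}(s)$ is a chord index with fixed $\Omega_{1}$-value $\overline{W}_{L}(s)$) combined with the general construction $F_{L}(t)=\sum_{c}\omega(c)\left(t^{ind(c)}-t^{f(L)}\right)$ stated after Definition \ref{axiom}, using $\omega(L)=\sum_{c}\omega(c)$ to rewrite the sum. The explicit $\Omega_{1}$ bookkeeping you flag is precisely the detail the paper leaves implicit, and your verification of it is sound.
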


$\mathcal{L}(L)$ is more like a theoretical framework than for practical use. You can see $\mathcal{L}_{L}(t,s)$ as an implementation of this framework. Hence $\mathcal{L}_{L}(t,s)$ has some properties of $\mathcal{L}(L)$. For example, $\mathcal{L}_{\overline{L}}(t,s)=-\mathcal{L}_{L}(t,s)$. However, $\mathcal{L}_{L}(t,s)$ does not have the next property of $\mathcal{L}(L)$.  This phenomenon shows $\mathcal{L}_{L}(t,s)$ loses some information of $\mathcal{L}(L)$.

\begin{proposition}
$L$ is an $n$-component classical link with $n\geq 2$. Then, $\mathcal{L}(L)=lk(L)(\mathcal{U}^{n-1}-\mathcal{U}^{n+1})$. Here $\mathcal{U}^{k}$ is a k-component trivial flat link. $lk(L)$ is the sum of writhes of all linking crossing points. Nevertheless, $\mathcal{L}_{L}(t,s)=0$.

\end{proposition}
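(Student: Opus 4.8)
The plan is to compute both expressions separately for a classical link $L=K_1\cup\cdots\cup K_n$. The statement has two parts: the formula for $\mathcal{L}(L)$ in terms of the linking number $lk(L)$, and the vanishing of $\mathcal{L}_L(t,s)$. I would treat the second part first, since it is the easier of the two and sets the tone.

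For $\mathcal{L}_L(t,s)=0$: by (a) of Proposition \ref{simplep}, every classical link has $Wr_{L}(t)=0$, and the same holds for every classical link arising from $L$ by $1$-smoothing, since $L_c$ is again a classical link whenever $L$ is. Hence $\overline{W}_{L_c}(s)=0$ for every crossing $c$ and $\overline{W}_{L}(s)=0$ as well. Substituting into $\mathcal{L}_L(t,s)=\sum_c\omega(c)t^{\overline{W}_{L_c}(s)}-\omega(L)t^{\overline{W}_L(s)}$ gives $\sum_c\omega(c)t^{0}-\omega(L)t^{0}=\omega(L)-\omega(L)=0$, using that $\omega(L)$ is the sum of all writhes. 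The only point requiring care is to confirm that $1$-smoothing a classical diagram yields a classical diagram (no virtual crossings are introduced), so that Proposition \ref{simplep}(a) genuinely applies to each $L_c$.

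For the formula $\mathcal{L}(L)=lk(L)(\mathcal{U}^{n-1}-\mathcal{U}^{n+1})$: I would analyze the shadow $\widetilde{L}_c$ term by term in $\mathcal{L}(L)=\sum_c\omega(c)\widetilde{L}_c-\omega(L)(\widetilde{L}\cup\mathcal{U})$. Since $L$ is classical, its shadow $\widetilde{L}$ is a trivial flat link $\mathcal{U}^n$, so the subtracted term is $\omega(L)\,\mathcal{U}^{n+1}$. For the smoothing terms, I split $\sum_c$ into self crossings and linking crossings. At a self crossing $c$ of $K_i$, the $1$-smoothing splits $K_i$ into two loops while $L$ stays classical; as a flat shadow this is still trivial, contributing $\omega(c)\,\mathcal{U}^{n+1}$, and these combine with part of the $\omega(L)$ term. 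At a linking crossing $c$ between $K_i$ and $K_j$, the $1$-smoothing fuses the two components, and the shadow becomes $\mathcal{U}^{n-1}$. Writing $\omega(L)=\operatorname{self}(L)+lk(L)$ where $\operatorname{self}(L)$ is the self-writhe sum, the self-crossing contributions cancel against $\operatorname{self}(L)\,\mathcal{U}^{n+1}$, leaving $lk(L)\,\mathcal{U}^{n-1}-lk(L)\,\mathcal{U}^{n+1}$, as claimed.

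The main obstacle is the careful bookkeeping of the flat shadows of the smoothed diagrams. One must verify that each $\widetilde{L}_c$ is genuinely a trivial flat link with the correct number of components, and that the component count drops by one at a linking crossing but rises by one at a self crossing. Because $L$ is classical, every flat shadow in sight is trivial, so the content is purely in counting components correctly and in matching the writhe sums; I would make this rigorous by noting that all flat virtual links appearing are unlinks $\mathcal{U}^k$ and then tallying coefficients of $\mathcal{U}^{n-1}$, $\mathcal{U}^n$, and $\mathcal{U}^{n+1}$ separately.
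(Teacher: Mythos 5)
Your proposal is correct and takes essentially the same route as the paper: both proofs rest on the fact that nontrivial classical flat links do not exist, identify the smoothed shadows as $\widetilde{L}_{c}=\mathcal{U}^{n+1}$ at self crossings and $\widetilde{L}_{c}=\mathcal{U}^{n-1}$ at linking crossings, and then let the writhe sums cancel (your bookkeeping with $\omega(L)=\operatorname{self}(L)+lk(L)$ is just a more explicit version of what the paper leaves implicit). One small repair in your argument for $\mathcal{L}_{L}(t,s)=0$: for a link with several components, the statement $W_{L_{c}}(t)=0$ of Proposition \ref{simplep}(a) does not by itself imply $\overline{W}_{L_{c}}(s)=0$, because $\overline{W}$ is assembled componentwise as $\sum_{i}\bigl(W_{i}(t)-t^{span_{K_{i}}}W_{i}(t^{-1})\bigr)$ and vanishing of the total $W$ does not force each $W_{i}$ to vanish; you need either the reason behind Proposition \ref{simplep}(a) (classicality makes every self crossing have $Ind=0$, hence every $W_{i}=0$), or, as the paper does, flat invariance of $\overline{W}$ (Proposition \ref{flatweight}) together with $\overline{W}_{\mathcal{U}^{k}}(s)=0$.
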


\begin{proof}

Nontrivial classical flat link does not exist because crossing change is an unlinking and unknotting operation. Hence, $\widetilde{L}_{c}=\mathcal{U}^{n-1}$ if $c$ is a linking crossing point and $\widetilde{L}\cup \mathcal{U}=\mathcal{U}^{n+1}$. So, we have $\mathcal{L}(L)=lk(L)(\mathcal{U}^{n-1}-\mathcal{U}^{n+1})$. On the other hand, $\overline{W}_{\mathcal{U}^{n-1}}(s)=\overline{W}_{\mathcal{U}^{n+1}}(s)=0$. So, $\mathcal{L}_{L}(t,s)=0$.

\end{proof}

Before I use examples to test our theory, let me construct another polynomial invariant $B_{L}(s,t)$. This invariant will be use in flat virtual knot theory.


Notice that a mixture of chord indices is still a chord index.

\begin{proposition}
$weight_{c}(s)= Ind(c)\overline{W}_{L_{c}}(s)$ is a chord index. We require $c$ to be a self crossing point because $Ind$ has no definition on linking crossing points.

\end{proposition}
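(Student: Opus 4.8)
The plan is to verify that $weight_{c}(s) = Ind(c)\overline{W}_{L_{c}}(s)$ satisfies all five chord index axioms of Definition \ref{axiom}. The key observation, already flagged in the preceding sentence, is that a product of two chord indices is again a chord index: if $ind_{1}$ and $ind_{2}$ each satisfy the axioms, then so does $ind_{1}\cdot ind_{2}$, because each axiom is a statement that certain crossing points share a common value (or a fixed value), and such equalities are preserved under multiplication. So I would first establish, or simply cite, that $Ind(c)$ and $\overline{W}_{L_{c}}(s)$ are both chord indices on the self crossing points, and then argue that their product inherits the axioms.

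More concretely, I would proceed axiom by axiom. For a self crossing point $c$, $Ind(c)$ is a chord index by the classical results recalled in Section \ref{Poly} (it is the index function of Definition \ref{Ind for knot}, computed by regarding the ambient component as a single knot), and $\overline{W}_{L_{c}}(s)$ is a chord index by Lemma \ref{flatW}. For axiom (1), the crossing $c$ involved in $\Omega_{1}$ has $Ind(c)=0$, so $weight_{c}(s)=0$ regardless of the value of $\overline{W}_{L_{c}}(s)$; thus it takes a fixed index. For axiom (2), the two crossings $c_{1},c_{2}$ in $\Omega_{2}$ satisfy $Ind(c_{1})=Ind(c_{2})$ and $\overline{W}_{L_{c_{1}}}(s)=\overline{W}_{L_{c_{2}}}(s)$, so their products agree. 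Axioms (3), (4), (5) follow identically: each factor is individually preserved by the relevant move, hence so is the product. The restriction to self crossing points is exactly what the statement notes, since $Ind$ is only defined there.

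The step I expect to require the most care is confirming that both factors are genuinely chord indices in the full sense of Definition \ref{axiom}, rather than merely weak chord indices. Lemma \ref{flatW} already upgrades $\overline{W}_{L_{c}}(s)$ to a chord index satisfying axiom (1), so the delicate point is whether $Ind(c)$, restricted to self crossings and computed within a single component, satisfies all the axioms including (1). On a self crossing the relevant $\Omega_{1}$ move takes place inside one component, so the index is computed exactly as for a knot, where $Ind(c)=0$ for the $\Omega_{1}$ crossing; hence axiom (1) holds. This is the only place where one must be alert to the self-versus-linking distinction highlighted in Proposition \ref{difference}, but because we multiply by $Ind(c)$ (not $Ind'(c)$), no obstruction of the kind appearing there arises.

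Finally, I would record that the product is well defined as a chord index taking values in the polynomials in $s$ (with integer coefficients scaled by the integer $Ind(c)$), and remark that this is precisely the structure needed to feed $weight_{c}(s)$ into the formal invariant construction of Section \ref{Axioms_index}, yielding the promised invariant $B_{L}(s,t)$.
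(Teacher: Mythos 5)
Your proposal is correct and follows essentially the same route as the paper: axiom (1) is disposed of by $Ind(c)=0$ for the $\Omega_{1}$ crossing, and the remaining axioms follow because the product of two chord indices (here $Ind(c)$, computed within the ambient component, and $\overline{W}_{L_{c}}(s)$ from Lemma \ref{flatW}, both restricted to self crossings) is again a chord index. The paper's only additional remark is that restricting $\widetilde{L}_{c}$, and hence $\overline{W}_{L_{c}}(s)$, to self crossing points still yields a chord index (citing the proof of Theorem 3.9 of \cite{Z.Cheng.2018}), a point your proposal handles implicitly and symmetrically by instead scrutinizing the factor $Ind(c)$.
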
\label{newind}
\begin{proof}
If $c$ is a crossing point involved in $\Omega_{1}$, then $Ind(c)=0$. So, $weight(c)(s)=0$ in this case. Hence, it satisfies (1). There is no difficulty to see that $\widetilde{L}_{c}$ is still a chord index  when we require $c$ to be a self crossing point (Compare the proof of Theorem 3.9 in \cite{Z.Cheng.2018}).  Hence, $\overline{W}_{L_{c}}(s)$ is still a chord index. So, The rest of axioms can be easily verified from the fact that both $Ind(c)$ and $\overline{W}_{L_{c}}(s)$ are chord indices.

\end{proof}

Thus we have following theorem.
\begin{theorem}
$$B_{L}(t,s)=\sum_{c\in S(L)}\omega(c)(t^{weight_{c}(s)}-1)$$ is an invariant of virtual links.

\end{theorem}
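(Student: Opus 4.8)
The plan is to invoke the general invariance principle already established in the excerpt rather than re-verify Reidemeister invariance by hand. Recall the formal construction immediately following Definition \ref{axiom}: given any chord index $ind$ satisfying the chord index axioms, the polynomial $F_{L}(t)=\sum_{c}\omega(c)(t^{ind(c)}-t^{f(L)})$ is an invariant, where $f(L)$ is the fixed index attached to the $\Omega_{1}$ crossing in axiom (1). So the whole theorem will reduce to two observations: first, that $weight_{c}(s)$ is a genuine chord index, and second, that its fixed $\Omega_{1}$-value $f(L)$ equals $0$ so that the subtracted term $t^{f(L)}$ becomes the constant $1$ appearing in the statement.

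First I would record that the preceding Proposition (the one establishing $weight_{c}(s)=Ind(c)\,\overline{W}_{L_{c}}(s)$ is a chord index) does exactly the first job: it shows $weight_{c}(s)$ satisfies all five chord index axioms on self crossing points. Next I would determine $f(L)$. By axiom (1), $f(L)$ is the common value of $weight_{c}(s)$ on the crossing point $c$ created by an $\Omega_{1}$ move. For such a $c$ we have $Ind(c)=0$, which was used in the proof of the previous proposition to verify axiom (1); consequently $weight_{c}(s)=Ind(c)\,\overline{W}_{L_{c}}(s)=0$ regardless of the value of $\overline{W}_{L_{c}}(s)$. Hence $f(L)=0$ and $t^{f(L)}=1$.

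With these two facts in hand the theorem is immediate: applying the formal construction to the chord index $ind(c)=weight_{c}(s)$ with $f(L)=0$ gives exactly
\[
B_{L}(t,s)=\sum_{c}\omega(c)\bigl(t^{weight_{c}(s)}-1\bigr),
\]
and the general principle guarantees this is invariant under all generalized Reidemeister moves. The only remaining point of care is that the sum in the definition ranges over $c\in S(L)$, the self crossing points, rather than all real crossings; this is forced because $Ind(c)$, and therefore $weight_{c}(s)$, is only defined on self crossings. I would note that the $\Omega_{1}$, $\Omega_{2a}$, and $\Omega_{3a}$ moves create, cancel, or permute only self crossings among themselves, and that moves involving linking crossings (or virtual moves) do not alter any surviving self crossing's contribution by axioms (4) and (5); this legitimizes restricting the sum to $S(L)$ while still applying the invariance argument.

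The main obstacle, such as it is, is purely bookkeeping: confirming that restricting the index sum to self crossings is compatible with the formal invariant construction, which was originally phrased over all crossings. I expect no genuine difficulty here, since the chord index $weight_{c}(s)$ is simply undefined (equivalently, contributes nothing) on linking crossings, and the invariance of $\overline{W}_{L_{c}}(s)$ as a flat/chord index was already secured in Lemma \ref{flatW} and the proof of the preceding proposition. Thus the proof is essentially a one-line appeal to the formal construction together with the verification that $f(L)=0$.
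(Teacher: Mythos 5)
Your proposal is correct and follows essentially the same route as the paper: the paper offers no separate proof of this theorem, deriving it ("Thus we have following theorem") directly from the preceding proposition that $weight_{c}(s)$ is a chord index together with the formal construction $F_{L}(t)=\sum_{c}\omega(c)(t^{ind(c)}-t^{f(L)})$ stated after Definition \ref{axiom}, with $f(L)=0$ because $Ind(c)=0$ on an $\Omega_{1}$ crossing. Your additional bookkeeping about restricting the sum to $S(L)$ (linking crossings created by moves contribute nothing, and axioms (4) and (5) protect the remaining self crossings) is exactly the justification the paper leaves implicit.
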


In the spirit of Remark \ref{flatrk}, we can use $B_{L}(s,t)$ to construct a flat invariant as follows.

\begin{theorem}
$\overline{B}_{L}(t,s)=B_{L}(t,s)+B_{\overline{L}}(t,s)=B_{L}(t,s)-B_{L}(t^{-1},s)$ is a flat invariant of  virtual links.

\end{theorem}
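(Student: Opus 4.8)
The plan is to establish the statement in two stages: first verify the middle identity $B_{\overline{L}}(t,s)=-B_{L}(t^{-1},s)$, and then deduce flat invariance, either by invoking the general construction advertised in Remark \ref{flatrk} or, more transparently, by a direct crossing-change computation. Throughout I will use that flat invariance means invariance under generalized Reidemeister moves together with crossing changes, and that $B_L$ is already a virtual link invariant.

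For the identity I would track how each ingredient of $B_L$ transforms when passing from $L$ to $\overline{L}$. Writing $\bar c$ for the self crossing of $\overline{L}$ corresponding to $c\in S(L)$, the three facts I need are: (i) $\omega_{\overline{L}}(\bar c)=-\omega_L(c)$, which is immediate; (ii) $Ind_{\overline{L}}(\bar c)=-Ind_L(c)$; and (iii) $\overline{W}_{(\overline{L})_{\bar c}}(s)=\overline{W}_{L_c}(s)$. For (ii) I would use that switching a crossing reverses its arrow while flipping its sign, so that every endpoint sign is preserved under switching all crossings; since reversing the arrow of $c$ interchanges $left(c)$ with its complement and the signed sum of all self-endpoints on $K_i$ vanishes (exactly the computation $A+A'=0$ in the proof of Proposition \ref{flatweight}), the value of $Ind$ negates. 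For (iii) I would observe that the $1$-smoothing is insensitive to over/under data, whence $(\overline{L})_{\bar c}=\overline{L_c}$; since $\overline{L_c}$ differs from $L_c$ only by crossing changes and $\overline{W}$ is a flat invariant by Proposition \ref{flatweight}, the two values agree. Combining (ii) and (iii) gives $weight_{\bar c}(s)=-weight_c(s)$, and then $B_{\overline{L}}(t,s)=\sum_c(-\omega(c))(t^{-weight_c(s)}-1)$ is, term by term, exactly $-B_L(t^{-1},s)$.

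For flat invariance it suffices, since $\overline{B}_L$ is already a virtual link invariant, to check invariance under a single crossing change $L\to L'$ at a crossing $c$. If $c$ is a linking crossing I would show $B_{L'}=B_L$ outright: switching $c$ leaves each $Ind(a)$ unchanged (its endpoint signs are preserved) and leaves each $\overline{W}_{L_a}(s)$ unchanged (the relevant smoothings differ by a crossing change and $\overline{W}$ is flat), so no summand moves. If $c$ is a self crossing with image $c'$ in $L'$, the crucial point is that $(L')_{c'}=L_c$ exactly, because smoothing $c$ erases its over/under information; together with $Ind(c')=-Ind_L(c)$ and $\omega(c')=-\omega(c)$ this gives $weight_{c'}(s)=-weight_c(s)$, while every $a\neq c$ contributes identically to $B_{L'}$ and $B_L$ as in the linking case. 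Hence the only change is that the $c$-summand contributes $-\omega(c)\bigl(t^{weight_c(s)}+t^{-weight_c(s)}-2\bigr)$ to $B_{L'}-B_L$, a quantity manifestly invariant under $t\mapsto t^{-1}$, so it cancels in the symmetrized combination $B_L(t,s)-B_L(t^{-1},s)$ and $\overline{B}_{L'}=\overline{B}_L$.

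The main obstacle is bookkeeping the sign behaviour of the two factors of $weight_c(s)=Ind(c)\overline{W}_{L_c}(s)$ under switching: the essential and slightly counterintuitive feature is that the two factors transform \emph{differently} --- $Ind$ negates while $\overline{W}_{L_c}$ is fixed, being already a flat invariant --- so their product negates, and this single negation drives both the identity $B_{\overline{L}}=-B_L(t^{-1},s)$ and the cancellation giving flat invariance. I would therefore be most careful in pinning down the endpoint-sign convention of Figure \ref{endpt} and the precise effect of arrow reversal on $left(c)$, since everything hinges on $Ind(c')=-Ind(c)$; the remaining steps are routine once this is secured.
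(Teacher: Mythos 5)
Your proposal is correct and follows essentially the same route as the paper: both arguments reduce flat invariance to invariance under a single crossing change and rest on the same three facts, namely $\omega(c')=-\omega(c)$, $Ind(c')=-Ind(c)$, $\overline{W}_{L_{a'}}(s)=\overline{W}_{L_{a}}(s)$ (because smoothing forgets the over/under data at the switched crossing and $\overline{W}$ is itself flat), with unchanged contributions from all other self crossings. The only difference is organizational: you additionally verify the middle identity $B_{\overline{L}}(t,s)=-B_{L}(t^{-1},s)$ explicitly and phrase the cancellation via the symmetry of $B_{L'}-B_{L}$ under $t\mapsto t^{-1}$, whereas the paper matches the contribution of $c$ to $\overline{B}_{L}$ directly with that of $c'$ to $\overline{B}_{L'}$ and leaves the middle identity implicit.
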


\begin{proof}
Let $L'$ be the link diagram obtained by switching some crossing point $c$ on $L$. For a crossing point $a$ on $L$, the corresponding crossing point on $L'$ is denoted by $a'$.

If $c$ is a linking crossing point, $\overline{B}_{L}(t,s)=\overline{B}_{L'}(t,s)$ can be seen from the definition of ${B}_{L}(t,s)$.

We assume both $c$ and $a$ are self crossing points. Then $$  \left\{ \begin{aligned}  Ind(a')=Ind(a)\ \text{and}\ \omega(a')=\omega(a)  & &\text{if}\  a\neq c  \\
           Ind(a')=-Ind(a)\ \text{and}\ \omega(a')=-\omega(a) & &\text{if}\  a=c
           \\ \end{aligned}   \right.  $$ and $\overline{W}_{L_{a'}}(s)=\overline{W}_{L_{a}}(s)$ for every self crossing point $a$ on $L$. If $a\neq c$, it is easy to see that $a'$ and $a$ have the same contributions to $\overline{B}_{L'}(t,s)$ and $\overline{B}_{L'}(t,s)$ respectively.  The contribution of $c$ to $\overline{B}_{L}$ is $\omega(c)t^{weight_{c}(s)}-\omega(c)t^{-weight_{c}(s)}=-\omega(c')t^{-weight_{c'}(s)}+\omega(c')t^{weight_{c'}(s)}$, which is also the contribution of $c'$ to $\overline{B}_{L'}(t,s)$.

\end{proof}

\begin{rk}
By Remark \ref{flatrk}, a flat invariant constructed from $\mathcal{L}_{L}(t,s)$ can be $\overline{\mathcal{L}}_{L}(t,s)=\mathcal{L}_{L}(t,s)+\mathcal{L}_{\overline{L}}(t,s)=0$. We get nothing from it. This is the reason of introducing $B_{L}(s,t)$.

\end{rk}

Parallel to properties of $W_{L}$, we give simple and useful properties of ${B}_{L}(t,s)$ and $\overline{B}_{L}(t,s)$.
\begin{proposition}
(a) ${B}_{L}(t,s)=\overline{B}_{L}(t,s)=0$ for classical (flat) links $L$.

(b) If ${B}_{L}(t,s)=\sum_{i=1}^{m}a_{i}t^{g_{i}(s)}$, then the self crossing number of $L$ is no less than $\sum_{i}|a_{i}|$. Especially, ${B}_{K}(t,s)$ gives a lower bound for crossing number of virtual knot $K$.

\end{proposition}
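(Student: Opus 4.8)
The plan is to treat the two parts separately, reducing each to the defining formula $B_{L}(t,s)=\sum_{c\in S(L)}\omega(c)(t^{weight_{c}(s)}-1)$ together with the identity $weight_{c}(s)=Ind(c)\,\overline{W}_{L_{c}}(s)$.

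For (a), I would first recall, exactly as in the proof of Proposition \ref{simplep}(a), that every real crossing point $c$ of a classical link diagram satisfies $Ind(c)=0$. Consequently $weight_{c}(s)=Ind(c)\,\overline{W}_{L_{c}}(s)=0$ for every self crossing point, so each summand equals $\omega(c)(t^{0}-1)=0$ and $B_{L}(t,s)=0$; then $\overline{B}_{L}(t,s)=B_{L}(t,s)-B_{L}(t^{-1},s)=0$ as well. For the flat case I would invoke the fact already recorded in the excerpt that no nontrivial classical flat link exists, since a crossing change is an unlinking and unknotting operation; hence a classical flat link is a trivial flat link carrying no crossings, on which both $B$ and $\overline{B}$ are evidently zero. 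Equivalently, since $\overline{B}_{L}(t,s)$ is a flat invariant it factors through the shadow, so its value on a classical flat link agrees with its value on any classical representative, which is $0$.

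For (b), I would mimic the counting argument of Proposition \ref{simplep}(c). Fix a diagram $D$ representing $L$; since $B_{L}(t,s)$ is an invariant, the exponents $g_{i}(s)$ and coefficients $a_{i}$ may be read off from $D$. Group the self crossing points of $D$ by the value of $weight_{c}(s)$: a crossing with $weight_{c}(s)=0$ contributes nothing, while the crossings sharing a common nonzero value $g_{i}(s)$ yield the coefficient $a_{i}=\sum_{c:\,weight_{c}(s)=g_{i}(s)}\omega(c)$. Because $\omega(c)=\pm1$, the triangle inequality gives $|a_{i}|\le\#\{c\in S(D):weight_{c}(s)=g_{i}(s)\}$, and as the groups are disjoint, summing over the distinct nonzero exponents yields $\sum_{i}|a_{i}|\le\#\{c\in S(D):weight_{c}(s)\neq0\}\le\#S(D)$. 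Since this holds for every diagram, it holds for one minimizing $\#S(D)$, so the self crossing number is at least $\sum_{i}|a_{i}|$. The specialization to a virtual knot $K$ is then immediate: a knot has a single component, so every real crossing is a self crossing and the self crossing number coincides with the crossing number.

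The only point requiring genuine care is the constant term created by the $-1$ in each summand: the crossings with $weight_{c}(s)\neq0$ also contribute collectively to the coefficient of $t^{0}$, so naively including that coefficient in $\sum_{i}|a_{i}|$ could inflate the bound. I would therefore make explicit that the expansion $\sum_{i}a_{i}t^{g_{i}(s)}$ is understood over the exponents $g_{i}(s)\neq0$, precisely as Proposition \ref{simplep}(c) counts only the crossings with nonzero index; with that convention the disjointness estimate above is clean and the lower bound follows without double counting.
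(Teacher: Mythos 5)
Your proof is correct, and for part (a) it is the paper's own argument: $Ind(c)=0$ at every self crossing of a classical link forces $weight_{c}(s)=0$, hence $B_{L}(t,s)=0$, and the flat statement follows from the nonexistence of nontrivial classical flat links (your alternative derivation via $\overline{B}_{L}(t,s)=B_{L}(t,s)-B_{L}(t^{-1},s)=0$ is equally valid). For part (b) the paper offers nothing beyond ``can be seen from the definition of $B_{L}(t,s)$,'' so your grouping-by-weight count is exactly the argument being invoked implicitly.

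The one place where you go genuinely beyond the paper is the constant-term caveat, and it is needed, not pedantic. Each self crossing with $weight_{c}(s)\neq 0$ contributes two monomials, $\omega(c)t^{weight_{c}(s)}$ and $-\omega(c)$, so the full expansion of $B_{L}(t,s)$ has a $t^{0}$-coefficient equal to $-\sum_{c:\,weight_{c}(s)\neq 0}\omega(c)$. If that coefficient were admitted among the $a_{i}$, your counting would only yield $\sum_{i}|a_{i}|\leq 2\,\#S(D)$ rather than the stated bound: for instance, a diagram whose nonzero-weight self crossings all have writhe $+1$ and pairwise distinct weights would give $\sum_{i}|a_{i}|=2N$ against only $N$ such crossings, and nothing in the definition excludes this. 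Your convention --- expand only over the nonzero exponents $g_{i}(s)$, exactly parallel to Proposition \ref{simplep}(c), which counts only crossings with $Ind(c)\neq 0$ --- is the reading under which the proposition follows from the count. The paper never confronts this because in its computations (Example \ref{eg2} and the Kishino variant) the writhes of the relevant crossings sum to zero, so the constant terms cancel and the ambiguity stays invisible.
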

\begin{proof}
$Ind(c)=0$ for every self crossing point of classical link $L$. Hence, $weight(c)=0$. Then ${B}_{L}(t,s)=0$.

Nontrivial classical flat link does not exist. Hence, $\overline{B}_{L}(t,s)=0$ when $L$ is classical.

(b) can be seen from the definition of ${B}_{L}(t,s)$.

\end{proof}

Having these invariants in hand, we can use examples to test them.

\begin{eg}\label{eg2}
\begin{figure}
\centering
 \subfigure{\includegraphics[height = 2cm, width = 3cm]{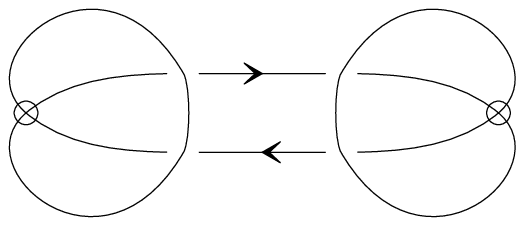}}
\ \
 \subfigure{\includegraphics[height = 2cm, width = 2cm]{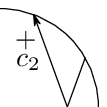}} \caption{Kishino knot}\label{Kishinoknot}

\end{figure}

\begin{figure}
 \centering
 \includegraphics[height =2.5cm, width =10cm]{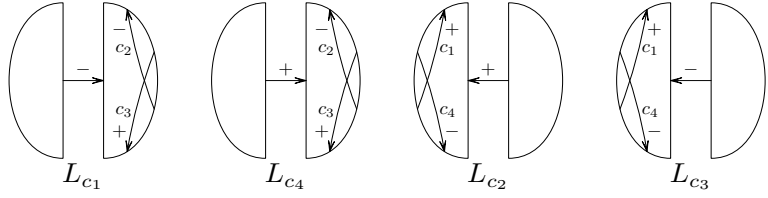}
 \caption{1-smoothing of Kishino}\label{sok}
\end{figure}

Kishino knot $K$ and its Gauss diagram are depicted in Figure \ref{Kishinoknot}. Dye-Kauffman used arrow polynomial to show this knot is nonclassical in \cite{Dye}.  It shows the connected sum of virtual knots is not well-defined.

We have $\mathcal{L}(K)=\widetilde{L}_{c_{1}}-\widetilde{L}_{c_{2}}+\widetilde{L}_{c_{3}}-\widetilde{L}_{c_{4}}$, where $L_{c_{i}}$ ($i\in \{1, 2, 3, 4\}$) is depicted in Figure \ref{sok}.

Through direct computations, we have following equations: $\overline{W}_{L_{c_{1}}}(s)=s^{-1}+s-1-s^{-2}$, $\overline{W}_{L_{c_{2}}}(s)=s^{-1}+s-1-s^{2}$, $\overline{W}_{L_{c_{3}}}(s)=s^{-1}+s-1-s^{-2}$, $\overline{W}_{L_{c_{4}}}(s)=s^{-1}+s-1-s^{2}$. To simplify notations, we set $f(s)= s^{-1}+s-1-s^{2}$. Then $\mathcal{L}_{K}(t,s)=2t^{f(s)}-2t^{f(s^{-1})}$. Hence, Kishino knot is not classical and its real crossing number is 4 by Remark \ref{Kishinork}.

\begin{figure}
 \centering
 \includegraphics[height =2cm, width =3cm]{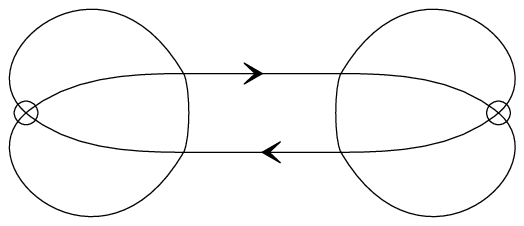}
 \caption{flat Kishino knot}\label{flatKishino}
\end{figure}

The shadow of Kishino knot is depicted in Figure \ref{flatKishino}. we have following data: $Ind(c_{i})=-1$ for $i \in\{1,2,3,4\}$. Then $B_{K}(t,s)=2t^{-f(s)}-2t^{-f(s^{-1})}$, $\overline{B}_{K}(t,s)=2t^{-f(s)}-2t^{-f(s^{-1})}-2t^{f(s)}+2t^{f(s^{-1})}\neq 0$. Hence flat Kishino knot is nontrivial.

\end{eg}


\begin{figure}
\centering
 \subfigure{\includegraphics[height = 2cm, width = 3cm]{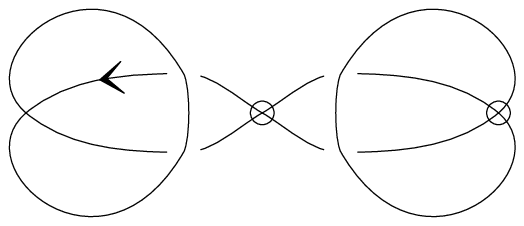}}
\ \
 \subfigure{\includegraphics[height = 2cm, width = 2cm]{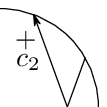}} \caption{a variant of Kishino}\label{variantKishino}

\end{figure}

\begin{figure}
 \centering
 \includegraphics[height =2.5cm, width =10cm]{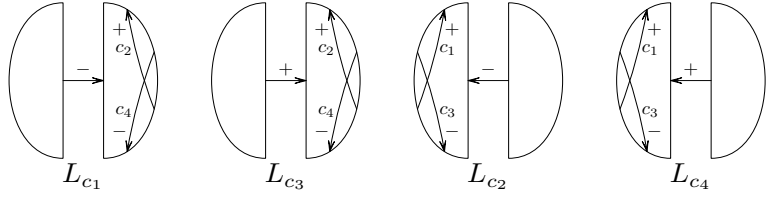}
 \caption{smoothings of $K'$}\label{smoothingofK'}
\end{figure}

\begin{eg}
Let $K'$ denote a variant of Kishino knot, which is depicted in Figure \ref{variantKishino}. We still use $K$ to denote Kishino knot. The results of 1-smoothing of $K'$ are depicted in Figure \ref{smoothingofK'}.

Through direct computations, we have $\mathcal{L}_{K'}(t,s)=t^{f(s)}+t^{-f(s)}-t^{f(s^{-1})}-t^{-f(s^{-1})}$, and $B_{K}(t,s)=2t^{f(s)}-2t^{f(s^{-1})}$. Here $f(s)$ is the same polynomial as in Example \ref{eg2}. Hence, $\overline{B}_{K'}(t,s)=2t^{f(s)}-2t^{f(s^{-1})}-2t^{-f(s)}+2t^{-f(s^{-1})}=-\overline{B}_{K}(t,s)\neq 0$. So, we have the following consequences.

$F(K')$ is nontrivial and different from $F(K)$. So, $K'$ is also nontrivial and different from $K$. And the real crossing number of $K'$ is 4.

\end{eg}

Mellor proved that writhe polynomial $W_{K}(t)$ of a virtual knot is determined by Alexander polynomial $\Delta_{0}(K)(u,v)$ in \cite{Mellor}. Through direct computation we have $\Delta_{0}(K')(u,v)=0$, where $K'$ is the variant of Kishino discussed above. Hence, $\mathcal{L}(K)$ can not be determined by Alexander polynomial.

Next, we use Slavik's knot to point out a defect of our polynomials. This knot appeared in \cite{Dye}.

 \begin{eg}
 \begin{figure}
 \centering
 \includegraphics[height =3cm, width =3cm]{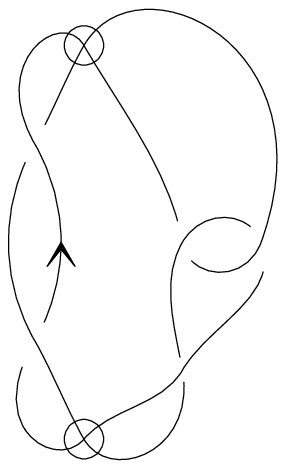}
 \caption{Slavik's knot}\label{eg3}
\end{figure}

 Slavik's knot $K$ is depicted in Figure \ref{eg3}. $W_{K}(t)=\mathcal{L}(K)=\mathcal{L}_{K}(t,s)=B_{K}(t,s)=\overline{B}_{K}(t,s)=0$. The virtuality of $K$ is not detected by arrow polynomial (see \cite{Dye}).

 \end{eg}


The construction of $\mathcal{L}(K)$ used the first layer ideas of arrow polynomial in \cite{Dye}. It seems there are some relations between these three polynomials of this section and arrow polynomial. So, I raise a question as follows.

\textbf{Q:} Are the three polynomials of this section determined by arrow polynomial? More generally, is $\mathcal{L}(L)$ determined by arrow polynomial?

\section{Proof of Lemma \ref{wci}}\label{tech}

Proof of Lemma \ref{wci}:
\begin{proof}

(4) and part of (5) are automatically satisfied because Gauss diagrams are unchanged under $\Omega'_{i}$ ($i=1,2,3$) and $\Omega_{3}^{v}$. In this proof, I always use $d$ to denote an arbitrary self crossing point not involved in the Reidemeister move.

By Lemma \ref{Polyak}, it suffices to check (5) on the four generators of Reidemeister moves. Let $c'$ denote the crossing point involved in $\Omega_{1}$. It easy to see either both endpoints of $c'$ are in $left(d)$ or neither of them are in $left(d)$. Hence, $c'$ has no contribution to $Ind'(d)$. The remaining cases of (5) are on $\Omega_{2a}$ and $\Omega_{3a}$.


For (2'), there are two cases according to $\Omega_{2a}$ is on one or two components.
The Gauss diagrams of $\Omega_{2a}$ in these two cases are shown in Figure \ref{2asingle} and Figure \ref{2atwo} respectively. It is easy to see the contributions of $c_{1}$ and $c_{2}$ to $Ind'(d)$ cancel  out. $Ind'(c_{1})=Ind'(c_{2})$ in these two figures are also obvious. Hence, (2') is satisfied. (5) is satisfied in this case.

\begin{figure}
 \centering
 \includegraphics[height =2cm, width =6cm]{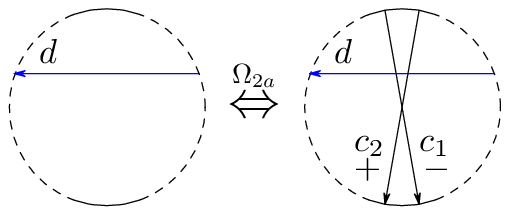}
 \caption{$\Omega_{2a}$ on a single component}\label{2asingle}
\end{figure}

\begin{figure}
 \centering
 \includegraphics[height =2cm, width =6cm]{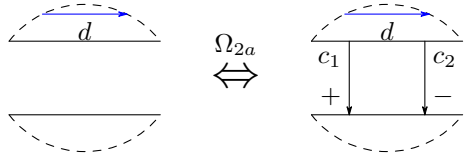}
 \caption{$\Omega_{2a}$ on two components}\label{2atwo}
\end{figure}

For (3'), there are three cases according to $\Omega_{3a}$ is on one, two or three components. We use \Rmnum{1}, \Rmnum{2} and \Rmnum{3} to denote them respectively.

The Gauss diagrams of $\Omega_{3a}$ in \Rmnum{1} are depicted in Figure \ref{3asingle1} and Figure \ref{3asingle2}.

\begin{figure}
 \centering
 \includegraphics[height =2cm, width =6cm]{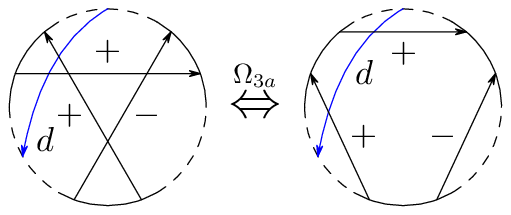}
 \caption{$\Omega_{3a}$ on a single component}\label{3asingle1}
\end{figure}

\begin{figure}
 \centering
 \includegraphics[height =2cm, width =6cm]{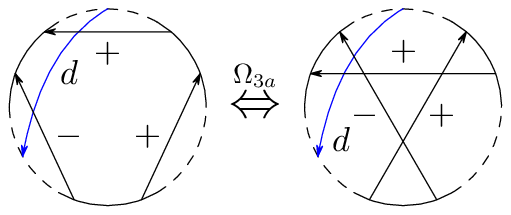}
 \caption{$\Omega_{3a}$ on two components}\label{3asingle2}
\end{figure}

\begin{figure}
\centering
 \subfigure{\includegraphics[height = 2cm, width = 2.5cm]{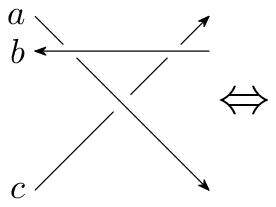}}
\ \
 \subfigure{\includegraphics[height = 2cm, width = 2cm]{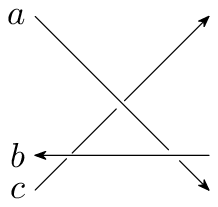}} \caption{$\Omega3a$}\label{3alink}

\end{figure}

Let $a$, $b$ and $c$ denote the three arc in $\Omega_{3a}$ shown in Figure \ref{3alink}. Without loss of generality, there are three subcases in \Rmnum{2}. Those are:

\rmnum{1})  $a,b\in K_{1}$ and $c\in K_{2}$;
\rmnum{2})  $b,c\in K_{1}$ and $a\in K_{2}$;
\rmnum{3})  $a,c\in K_{1}$ and $b\in K_{2}$.

The Gauss diagrams of $\Omega_{3a}$ in \rmnum{1}, \rmnum{2}\ and \rmnum{3}\ are depicted in Figure \ref{3atwo1}, Figure \ref{3atwo2} and Figure \ref{3atwo3} respectively.
\begin{figure}
 \centering
 \includegraphics[height =2.5cm, width =7cm]{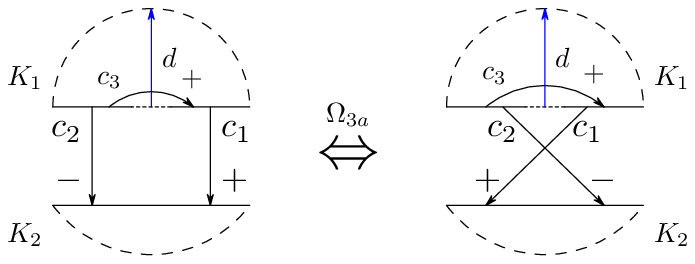}
 \caption{$\Omega_{3a}$ on two components}\label{3atwo1}
\end{figure}
\begin{figure}
 \centering
 \includegraphics[height =2.5cm, width =7cm]{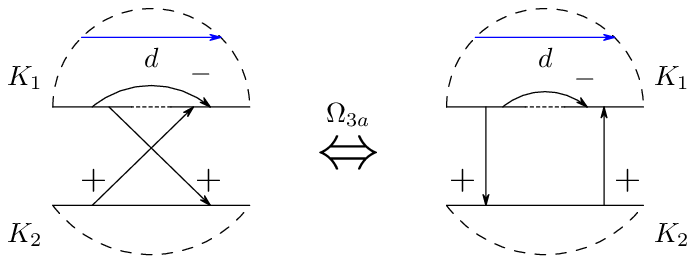}
 \caption{$\Omega_{3a}$ on two components}\label{3atwo2}
\end{figure}
\begin{figure}
 \centering
 \includegraphics[height =2.5cm, width =7cm]{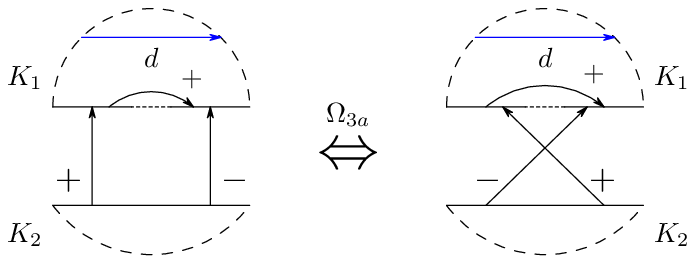}
 \caption{$\Omega_{3a}$ on two components}\label{3atwo3}
\end{figure}

Without loss of generality, we suppose $a\in K_{1}$, $b\in K_{2}$ and $c\in K_{3}$ in case \Rmnum{3}. The Gauss diagrams of $\Omega_{3a}$ in \Rmnum{3} are depicted in Figure \ref{3a3}.
\begin{figure}
 \centering
 \includegraphics[height =3cm, width =10cm]{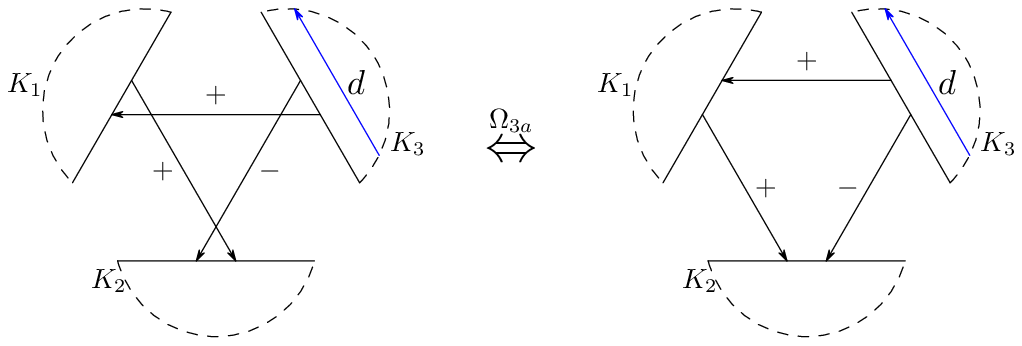}
 \caption{$\Omega_{3a}$ on three components}\label{3a3}
\end{figure}

It is not hard but tedious to directly check (3') and (5) on these Gauss diagrams. We just choose one i.e. Figure \ref{3atwo1} to verify these axioms. The verifications on other diagrams are similar. As shown in this figure, $c_{1}$, $c_{2}$ and $c_{3}$ denote the three crossing points involved in $\Omega_{3a}$. $d$ is an example of a real crossing point not involved in $\Omega_{3a}$. The only self crossing points are $c_{3}$ and $d$. The endpoints of $c_{2}$ are not in $left(d)$. Although $c_{1}$'s and $c_{3}$'s are in $left(d)$, the contributions of $c_{1}$ and $c_{3}$ to $Ind'(d)$ cancel out. Hence $Ind'(d)$ is unchanged in this case. It is easy to see the contributions of $c_{1}$ and $c_{2}$ to $Ind'(c_{3})$ cancel out. $d$ has the same contribution to $Ind'(c_{3})$ on both diagrams. Hence (3') and (5) are satisfied in this case.

In summary, $Ind'$ is a weak chord index.

\end{proof}

There is one thing in above proof I want to point out.  Every component circle is drawn in the unbounded region of the other in the Gauss diagrams of links I have drawn.  You can also draw the circle in the bounded region of the other. These two forms of Gauss diagram are not related by an planar isotopy. But it does not matter because the left parts of self crossing points have nothing to do with the forms.


\begin{ack}
\emph{ The author is grateful to his supervisor Hongzhu Gao for sharing ideas of the weak chord index. The author also wants to thank Zhiyun Cheng for his valuable advice and instructions. The author is supported by NSFC 11771042 and NSFC 11571038.}


\end{ack}


\newpage

\end{document}